\def\ZZ         {{\mathbb Z}}
\def\RR         {{\mathbb R}}
\def\CC         {{\mathbb C}}

\def\QQ         {{\mathbb Q}}
\def\PP         {{\mathbb P}}
\def\NN         {{\mathbb N}}
\def\ZZ         {{\mathbb Z}}

\def\A         {{\cal A}}
\def\B         {{\cal B}}

\def\I         {{\cal I}}

\def\P          {{\cal P}}

\def\S           {{\cal S}}

\def\Card      {{\rm Card}}

\def\Ker        {{\rm {Ker}}}

\def\dim        {{\rm dim}}

\def\Re         {{\mathfrak R}}

\def\deg        {{\rm deg}}

\def\cL{\mathcal{L}}

\def\cal        {\mathcal}

\documentclass{amsart}
\usepackage{amsmath}
\usepackage{amssymb}
\usepackage{verbatim}
\usepackage{eucal}
\usepackage{enumerate}
\usepackage{tikz}
\usetikzlibrary{decorations.markings,decorations.pathmorphing,cd}
\usetikzlibrary{arrows}
\usetikzlibrary{calc}
\usepackage{pgfplots}
\pgfplotsset{every axis/.append style={
                    axis x line=middle,    
                    axis y line=middle,    
                    axis line style={-,color=blue}, 
                    xlabel={$x$},          
                    ylabel={$y$},          
            }}

\usepackage{amsthm}
\usepackage{amssymb}
\usepackage{enumerate}
\usepackage{graphicx}
\usepackage{hyperref}
\newtheorem{thm}{Theorem}[section]
\newtheorem{theorem}{Theorem}[section]

\newtheorem{prop}[theorem]{Proposition}

\newtheorem{cor}[theorem]{Corollary}
\theoremstyle{definition}
\newtheorem{rem}[theorem]{Remark}
\newtheorem{dfn}[theorem]{Definition}

\newtheorem{exam}[theorem]{Example}

\theoremstyle{remark}
\newtheorem{remark}[theorem]{Remark}

\title{Braid monodromy and Alexander polynomials of real plane curves} 
\author{ Anatoly Libgober}
 
\address{Department of Mathematics\\
University of Illinois\\
Chicago, IL 60607}
\email{libgober@uic.edu}
\thanks{}
\begin{document}
\begin{abstract} We describe symmetries of the braid monodromy
  decomposition for a class of plane curves defined over reals
  including the real curves with no real points and proving new divisibility relations for Alexander
  invariants of such curves.
\end{abstract}
\maketitle
\centerline{\dedicatory{In Memory of John W.Wood.}}

\section{Preface and statement of results}\label{intro}

Alexander polynomial of a projective curve embedded into a smooth algebraic surface (cf. \cite{handbook})
is an invariant of the fundamental group of the complement to the
curve. It measures a degree of non-commutativity of the group 
and can be expressed in terms of geometric data of the surface and the curve,
including the local types and position of singularities of the curve on the
surface. An interesting problem is understanding which polynomials 
can appear as the Alexander polynomials of the fundamental groups
of the complements: this is a very special case of the fundamental problem 
of understanding the quasi-projective groups.

Divisibility
theorems give strong restrictions on the class of 
polynomials which can occur as the Alexander polynomials of projective
curves. One such result (cf.\cite{meduke},\cite{handbook}) asserts that Alexander polynomial 
of the fundamental group of the complement to an ample curve 
$C$ divides the product of the Alexander polynomials
of links of all singularities of the curve. In particular, if
such $C$ has ordinary nodes and cusps as the only singularities than the global Alexander
polynomial has a form $(t-1)^{a}(t^2-t+1)^b$. 
Focusing on the curves in a complex projective
plane rather than general smooth algebraic surfaces (as we will do in
this paper) one can easily see that  
 $a+1$ is the number of irreducible components of $C$
(cf. \cite{handbook}) but the range of multiplicities $b$ is far from 
clear.  In the case of curves in $\CC\PP^2$, the global Alexander polynomial also divides 
the Alexander polynomial of the link in the 3-sphere 
which is the boundary of a small tubular neighborhood
of a line with the link being defined as the intersection of the
curve with this 3-sphere. 
If this  ``line at infinity'' is transversal to the   
curve $C$ then the corresponding link
is the Hopf link with $d$ components where $d$ is the degree of $C$ and
 the Alexander polynomial of the latter is 
$(t-1)(t^d-1)^{d-2}$. One obtains that for curves with nodes and cusps 
the multiplicity of the factor $(t^2-t+1)$ is at most $d-2$ if $6
\vert d$ and is zero otherwise (for a similar
divisibility relation on surfaces more general than $\CC\PP^2$ see
\cite{handbook}).
\footnote{A different divisibility relations one obtains if
the line at infinity is selected to be non-transversal to the curve or contains
the singularities. This gives divisibility by the
Alexander polynomial of the complement to affine curve which is a
complement in $\CC\PP^2$ to the union of
the curve and the line. The Alexander polynomial of this affine curve may be different than the Alexander
polynomial of the projective curve. A slightly better than $d-2$ bound on the
multiplicity is given in \cite{crelle} Cor.3.13.}

This bound is much weaker than what so far was observed in examples. At the moment,
 it is unknown if the multiplicity of a primitive root of unity
of degree $6$ in the Alexander polynomial of a curve,  having as its singularities ordinary cusps and
nodes only,  has a bound independent of
$d$. The largest known multiplicity is $4$ for a curve of degree 12
with 39 cusps (cf. \cite{crelle}). It is 
known that if the Mordell Weil ranks of isotrivial elliptic
threefold (or isotrivial elliptic surfaces) are bounded, 
the multiplicities of the factors of the
Alexander polynomials of curves in this class, also are bounded independently of degree (cf. \cite{crelle} for
proof of both assertions). 

In this note, we discuss a new type of divisibility relations for the Alexander
polynomials for the complements to curves in $\CC\PP^2$ for which the defining equations have only
real coefficients \footnote{in the case of reducible curves, this
  allows for irreducible components not to have $\RR$  as the field of
  definition; we do not make any assumptions about the reality of the
  critical values of projections used to construct the braid monodromy
as was customary in previous works, cf. \cite{mina}}. The presence of
a real structure imposes restrictions on the
braid monodromy of the curve. The latter is an invariant of a curve $C
\subset \CC\PP^2$ 
and its projection onto a complex line $N \subset \CC\PP^2$, given as the homomorphism
$\pi_1(N\setminus Cr,b) \rightarrow B_d$. Here $b$ is a base point, $Cr$ is the subset of $N$
consisting of points over which the fiber of projection of $C$
has cardinality less than the degree $d$ of $C$, and $B_d$ is Artin's braid
group. If one views $B_d$ as the mapping class group of a disk with
boundary and $d$ marked points then the braid monodromy assigns to a
loop in $N \setminus Cr$ the class of
the diffeomorphism is given by the trivialization of the fibration of the
pair $(\CC\PP^2\setminus p,C)$, 
over this loop (here $p$ is the center of projection onto $N$, 
cf.  \cite{moishezon}, \cite{handbook} for a more
recent  exposition or section \ref{braidmonodromy} below).

The main results of the paper are the Theorem \ref{maindecomp}, its corollary given as
Proposition \ref{braidgarside}, the Theorem \ref{alexanderdivisibilitymain}, and its corollary the Proposition \ref{alexanderbound}.
  
 In Section  \ref{braidmonodromy}, we describe symmetry
 in the structure of this
homomorphism depending on the real structure of the curve. It appears that 
certain operators introduced by Garside 
(cf. \cite{garside}) play important role in the description of this 
symmetry and vice versa, the study of braid monodromy of real curves
gives geometric interpretation to some of Garside's identities. In
particular, if the projection is defined over $\RR$ and the
intersection of the finite set $Cr$ with the real locus of $N$ is empty
then the braid monodromy takes the class in $\pi_1(N \setminus Cr,b)$
(where $b$ is real) represented by the loop corresponding to $\RR\PP^1$ to the Garside word
$\Delta$. Braids corresponding to such loops were considered in \cite{hurwitz}
in the related context of Hurwitz
schemes. 
If the real part of the critical set $Cr \cap \RR\PP^1 \ne \emptyset$, then $\pi_1(N\setminus
Cr,b)$ contains three canonical loops: the one containing only critical
points in the real part of the critical set and two loops containing all
critical points in each of two connected components of $N\setminus
\RR\PP^1$. We describe constraints on corresponding braids and solve
the equations in the braid group to obtain an explicit form of the
braids corresponding to these canonical loops. 

In section \ref{presentations} we prove that the fundamental group  of
a curve over $\RR$ is a quotient of the fundamental group of a link
which is one of the closed braids attached to the curve in section \ref{braidmonodromy}. 
The argument here is purely topological and does not use an algebro-geometric
structure, and can be used in a different, for example symplectic, context.

In Section \ref{alexpolysection} we show that the global Alexander
polynomial of a curve over $\RR$ divides the Alexander polynomial of
 a link which is the closure of a braid associated with
the real structure and discussed in two previous sections. We calculate
these Alexander polynomials in some cases and make
the divisibility relations explicit. For
example for real curves without real points at all, the Alexander polynomial
divides  $(t^d-1)^{{{d-2} \over 2}}(t^{d\over 2}+1)(t-1)$. This gives 
${d\over 2}-1$ as a
new bound on the multiplicity of the factor $t^2-t+1$ in the Alexander
polynomial of a curve with nodes
and cusps over $\RR$ and no real points.  Moreover, we show that this bound is sharp at
least for such sextics. The last section also contains a discussion of 
braid monodromy of arrangements defined over reals, i.e. such that the equation of the union of
all lines is defined over $\RR$, but having only finitely many real
points. Such arrangements are perhaps of interest on their own. 
Note that the effect of complex conjugation on braid monodromy was
considered earlier in \cite{cohensuciu} (in connection with a study of
MacLane arrangements). 

Part of this work was done while the author participated in ``Braids''
program at ICERM in the spring of 2022. The author thanks
B.Guerville-Balle, A.Degtyarev, the anonymous referee for very useful comments and references in
connection with the earlier version
of this note, and most importantly Juan Gonzalez-Meneses for the answer to the author's question (cf. \cite{gm2023}
and Remark \ref{juananswer}).

\section{Complex conjugation and braid groups}\label{complconjsection}

Let $\P_N= \{P_1,\cdots P_N\}$ be an invariant under
conjugation subset in $\CC$. Let $P_0 \in \RR$. Complex conjugation induces on 
free group $\pi_1(\CC\setminus \P_N,P_0)$  an automorphism of order
2. In the following system of generators, this
automorphism has a particularly simple form. 

Recall that a good ordered
system of generators of $\pi_1(\CC\setminus \P_N,P_0)$ is given by
$N$ loops each consisting of a segment $I_i$ running from $P_0$ to the
vicinity of
one of the points $P_i$, followed by a counterclockwise loop running
along the boundary of a small circle centered at $P_i$ and  then
returning to $P_0$ along $I_i$. Moreover, it is assumed that these
loops are non-intersecting and ordered by the counterclockwise
ordering of their intersection points with a small circle centered at $P_0$.

Without loss of generality, we assume that the points in $\P_N$ are labeled so that $P_1,\cdots P_k \in
H^+$, $P_N,\cdots P_{N+1-k} \in H^-$, $P_i$ is the complex conjugate of
$P_{N+1-i}, i=1,\cdots, k$ \footnote{here $H^+$ (resp. $H^-$) denote the upper (resp. lower) half plane in $\CC$},  $P_{k+1},\cdots P_{N-k}$ form an increasing sequence of real numbers i.e. $P_{k+1}< \cdots  <P_{N-k}<P_0$. 
Let $\alpha_+: [0,1] \rightarrow H^+$ be an embedding of the interval $[0,1]$ such that $\alpha^+(0)=P_1,
\alpha^+(1)=P_k$ and $P_i \in \alpha^+[0,1], i=1,\cdots, k$ and the sequence $({\alpha^+})^{-1}(P_i) \in [0,1]$ is increasing. 
Let $I$ be the interval in $\CC$ which is the union of the interval $\alpha_+([0,1])$, followed by the line segment $P_k,P_{k+1}$, then followed by the part of the real axis running from $P_{k+1}$ to $P_{N-k}$, then followed by the segment $[P_{N-k}, P_{N+1-k}]$ and having as the 
final part the interval $\overline {\alpha_+([0,1])}$. Then $I \supset \P_N$ and the above order in $\P_N$ is induced by an orientation of $I$.
In the case when $N=2k$ and $\P_N\cap \RR=\emptyset$, $P_{k+1}=\bar P_k$ and in the above construction of $I$, instead of the segments with 
real endpoints we use the segment connecting $P_k$ and $P_{k+1}$ \footnote{the order of the points in $\P_N$ in $H^+$ can be selected arbitrarily but, as follows from this description, is consistent with 
the orientation of $\alpha_+([0,1])$ and the order of points in $H^+$
determines the order of the conjugate points in $H^-$.
In practice, it is convenient to order the points in $H^+$ according to the order of imaginary parts of the points.}. 

We select a good ordered system of generators $x_1,\cdots, x_{N-k}$ of the fundamental group
$\pi_1(H^{+}_{\epsilon}\setminus \bigcup_1^{N-k} P_i,P_0)$ of the complement to 
the set $P_1,\cdots P_{N-k}$ in a $\epsilon$-neighborhood
$H^+_{\epsilon}$ of the
closure of $H^+$ in $\CC\PP^1$ and extend it to a good ordered system
$x_1,\cdots, x_N$ adding loops, as the set each being the conjugate of a loop among the first $k$ loops in
the already selected system but using orientation and the order given by 
the above definition of a good ordered system.
  With these notations,
the involution on $\pi_1(\CC\setminus \P_N,P_0)$  induced by the complex conjugation $\gamma \rightarrow \bar \gamma$
of the oriented loops is given  by (writing from the right to the left):
\begin{equation}
   \bar x_i=x_{N+1-i}^{-1}, i=1,...k \ \ \ \bar x_i=x_{N-k}\cdots x_{i+1}
   x_i^{-1}x^{-1}_{i+1}\cdots x_{N-k}^{-1}, \ \  i=k+1,\cdots N-k
\end{equation}
Note that if at most one of $P_i, \  i=1,\cdots N$ is real, then the action is just
$\bar x_i =x_{N-i}^{-1}$. In particular, one has 
$$\overline{x_N\cdots x_1}=(x_N\cdots x_1)^{-1}$$

Let $Diff(D^2,\P_N)$ be the group of diffeomorphisms of a conjugation
invariant disk in $\CC$ containing $\P_N$ and
taking the set $\P_N$ into itself. Let $Diff^+(D^2,\P_N,\partial D^2)$ be its
subgroup consisting of diffeomorphisms which are orientation preserving and 
 constant on the boundary of the disk. The latter is a normal
subgroup of the former and the same is the case for the groups of connected
components of each of these groups.
The group $\pi_0(Diff^+(D^2,\P_N,\partial D^2)$ is Artin's braid
group $B_N$. The complex conjugation is an orientation-reversing element in
$Diff(D^2,\P_N)$ and conjugation by this element, acting as an inner automorphism of
$\pi_0(Diff(D^2,\P_N))$, acts as the outer automorphism of the normal subgroup $B_N=\pi_0(Diff^+(D^2,\P_N,\partial D^2)$.
We denote this automorphism as $\beta \rightarrow \bar \beta, \beta
\in B_N$. The group $B_N$ is a subgroup of the group of automorphisms
of  $\pi_1(\CC\setminus \P_N,P_0)$ and from the above definition,  one has 
\begin{equation}\label{conjugationdefinition}
    \bar \beta(x)=\overline{\beta (\bar x)} \ \ \ x \in
    \pi_1(\CC\setminus \P_N,P_0), \beta \in B_N
\end{equation}

It is immediate that (\ref{conjugationdefinition}) implies the following for the action of the conjugation on the 
standard generators $s_1,\cdots s_{N-1}$ of $B_N$ \footnote{these generators depend on a choice of $I$ or, more specifically, the choice of 
the part of $I$ containing the points in $H^+$ since the rest of interval $I$ described above canonically. Recall, that generators of 
the braid group, viewed as $\pi_1(Diff^+(D^2,\P_N,\partial D^2))$ are the half-twists corresponding to simple proper arcs connecting consecutive points 
of $\P_N$ (cf. \cite{fb}, 9.3.1). The choice of $I$ with orientation provides both the ordering and the arcs between consecutive points. The action
of the braid group on the free group in these generators is given by $s_i(x_i)=x^{-1}_ix_{i+1}x_i, s_i(x_{i+1})=x_i, s_i(x_j)=x_j,\forall  j\ne i,i+1$. In 
particular $s_i(x_N\cdots x_1)=x_N\cdots x_1$.}
, given by the orientation of $I$ from $P_1$ to $P_N$ i.e. the counterclockwise Dehn half-twists
corresponding to  the subintervals of $I$ connected consecutive points in $\P_N$, with 
the order along $I$:
\begin{equation}\label{unrealconj}
    \bar s_i=s_{N-i}^{-1} \ \ \ i=1,..,k-1,N-k+1,\cdots,N-1; 
\end{equation}
\begin{equation}\label{realconj}
\ \ \bar s_i =s_i^{-1} \ \
    i=k+1, \cdots N-k-1 
\end{equation}
Conjugates of generators in the remaining pair are given by \footnote{$\bar s_k$ is the clockwise Dehn twist about the segment 
$\overline {[P_k,P_{k+1}]}=[P_{N-k+1},P_k]$ and $\bar s_{N-k}$ is the clockwise Dehn twist about the segment $[P_{N-k},P_k]$}
\begin{equation}\label{specialconj}
\bar s_k=s_{N-k}\cdots
s_{k+1}s_k^{-1}s_{k+1}^{-1}\cdots s_{N-k}^{-1}
; \ \ 
\bar s_{N-k}=s_{N-k-1} \cdots s_{k+1}s_k^{-1}s^{-1}_{k+1}\cdots s^{-1}_{N-k-1}
\end{equation}

Conjugation on the braid group depends on the number of real points  in the set $\P_N$ 
and has particularly form (\ref{unrealconj}) if 
${\rm Card}(\P_N \cap \RR) \le 1$ or (\ref{realconj}) if $\P_N \subset \RR$ 
(the latter is the case considered in \cite{cohensuciu}).

Following Garside (cf. \cite{garside} sec. 1.2 and 2.1)
we will use the involution $\Re: s_i
\rightarrow s_{N-i}$ and the anti-homomorphism
$rev: B_N \rightarrow B_N$ which is rewriting a word in generators 
$s_i$ of the braid group or their 
inverses in reversed order. One has $rev(gh)=rev(h)rev(g), \forall
g,h\in B_N$.
We will use similar operations $\Re x_i=x_{N+1-i}$ and $rev$ on
generators $x_1,..,x_N$ of a free
group.  In particular, if ${\rm Card} \P_N \cap \RR \le 1$
(equivalently $k={N \over 2}$ or $k={{N-1} \over
  2}, N \ odd$) i.e. the action is given by (\ref{unrealconj}) then 
\begin{equation}\label{conjugationformulas}\bar s_i=\Re(s_i)^{-1} \ \ \ 
\bar x_i=\Re(x_i)^{-1} 
\end{equation}

It follows from \cite{garside} that with such restriction on $k$, 
the action of complex conjugation on Garside word satisfies $\bar \Delta=\Delta^{-1}$. Also, note that the complex conjugation 
and inner automorphisms generate $Aut B_N$ cf. \cite{dyer}: the
automorphism $\epsilon_n$ in that paper is the product of $\Re$ and
complex conjugation; in the case $\P_N \in \RR$, the automorphism $\epsilon_n$ is the complex
conjugation cf. 
(\ref{realconj}).



\section{Braid monodromy of curves over $\RR$}\label{braidmonodromy}


  Recall the definition of braid monodromy in the context of real
  curves.  Let $P \in \RR\PP^2$, $p_{\RR}:
  \RR\PP^2\setminus P \rightarrow N_{\RR}$ be the projection from $P$
  onto a line $N_{\RR}$ and  $\cL_{\RR}$ be the corresponding pencil of lines in $\RR\PP^2$.
 For each of these objects, the corresponding complexification will be denoted
  by the same letter but with $\RR$ changed to $\CC$. Complex conjugation
  acts on the set of  $\CC$-points of each of these sets, having 
the set of real points as the fixed point set. The fiber
of projection $p_{\CC}$ over $c \in N_{\CC}$ will be denoted $L_{\CC,c}$.
 
  Let $b \in N_{\RR}$ be a point selected so that
    $L_{\CC,b}$  is 
  transversal to the complexification $C_{\CC}$ of a  curve $C_{\RR}$. Let $Cr \subset N_{\CC}$
  be the subset of the points $c$ such that $\Card (p^{-1}(c) \cap C_{\CC})<d, d={\rm deg}(C)$.

Let $\gamma(t)$ be a loop in $N_{\CC}$ with initial and
endpoints being at $b \in N_{\RR}$ and situated in the upper half plane of $N_{\CC}$. Let $\bar
\gamma(t)=\overline{\gamma(t)}$ be its conjugate.  Consider
a trivialization of projection of the pair: $p: (p_{\CC}^{-1}(\gamma), C_{\CC} \cap p_{\CC}^{-1}(\gamma)) 
 \rightarrow \gamma$ i.e. a 
continuous map of pairs $\Phi: (I\times \CC, I \times [d]) \rightarrow
(p_{\CC}^{-1}(\gamma),C_{\CC} \cap p_{\CC}^{-1}(\gamma))$ (here
$I=[0,1]$ and $[d]$ is a fixed subset of $\CC$ of cardinality $d$) such that 
\begin{enumerate}
\item[$(i)$] $\Phi$ is compatible with projections of its source and
  target onto $I$ and $\gamma$
  respectively and in particular $\Phi(0,z)=\Phi(1,z) \in p_{\CC}^{-1}(b)$
  for any $z \in \CC$.
\item[$(ii)$] Restrictions of $\Phi$ onto $[0,1)\times \CC$ and $(0,1]
  \times \CC$ are homeomorphisms onto their targets. 
\item[$(iii)$] The trivialization is constant outside of a disk in $L_{\CC,b}$
  containing $L_{\CC,b}\cap C$ (in particular, for any $x \in L_{\CC,b}$ outside
  of this disk and $z\in \CC$ such that $\Phi(0,z)=x$ one has $\Phi(1,z)=x)$.
\end{enumerate} 
The monodromy along the loop $\gamma(t)$ is a diffeomorphism of the  pair
$(L_{\CC,b},C_{\CC}\cap L_{\CC,b})$
into itself sending $x \in L_{\CC,b}$ to $\Phi(1,z(x))$ where $z(x)$
is the solution to $\Phi(0,z(x))=x$. For a trivialization satisfying $(i),(ii),(iii)$, 
the braid corresponding to the isotopy class of such
diffeomorphism  
via identification of Artin's braid group with the mapping class
group of a disk with marked points will be denoted as $\beta(\gamma)$.

\begin{dfn}  {\it The braid monodromy of a plane curve is the homomorphism 
$\pi_1(N_{\CC}\setminus Cr,b) \rightarrow B_d$ which assigns to the
class of a
loop the braid in Artin's braid group corresponding to the
diffeomorphism given by the monodromy obtained from a trivialization
over the loop as described above.

Following \cite{moishezon} we present braid monodromy
as a factorization of the word $\Delta^2$ written as the product of
braids representing the value of braid monodromy on a sequence of a
good ordered system of generators of the fundamental group of the complement 
$\pi_1(N_{\CC}\setminus Cr,b)$.}
\end{dfn}

Complex conjugation acts on trivializations as follows. Clearly, since $C$ is
defined over $\RR$, for a loop $\gamma$ with a base point $b \in \RR$, 
one has $$(\overline{p_{\CC}^{-1}(\gamma),C_{\CC} \cap
  p_{\CC}^{-1}(\gamma)})=(p_{\CC}^{-1}(\bar \gamma),C_{\CC} \cap
  p_{\CC}^{-1}(\bar \gamma))$$ 

\begin{dfn}\label{conjtriv} {\it Conjugate of a trivialization $\Phi: (I\times \CC,I\times [d])
  \rightarrow (p^{-1}(\gamma),C_{\CC}\cap p_{\CC}^{-1}(\gamma))$ is the
  trivialization of $p_{\CC}$ over the loop $\bar \gamma$ given by
 \begin{equation}
   \bar \Phi (t,z)=\overline {\Phi (t,z)}
\end{equation} 
In particular, the monodromy diffeomorphism of $(p_{\CC}^{-1}(b),C
\cap p_{\CC}^{-1}(b))$ corresponding to trivialization $\bar \Phi$, in
terms of trivialization $\Phi$  
is given by: $x \rightarrow \overline{\Phi(1,\tilde z)}, x \in p_{\CC}^{-1}(b)$ where 
$\tilde z$ is determined by $\Phi(0,\tilde z)=\bar x \in p_{\CC}^{-1}(b)$}
\end{dfn}

\begin{remark} In general, it is impossible to trivialize over a loop
  the pair  $(p_{\CC}^{-1}(b),C
\cap p_{\CC}^{-1}(b))$ together with involution given by
conjugation. The type of involution (given by the number of fixed
points, i.e. the number of real points in the fiber) changes while one
moves along $\gamma$, but this procedure provides a well-defined 
diffeomorphism of pairs with involution.
\end{remark}

\begin{remark} An alternative way to define a braid monodromy is to
  use the so-called coefficient homomorphism  defined as the holomorphic map 
 assigning to an affine plane curve given by Weierstrass polynomial $y^d+\sum_{i=0}^{d-1} a_i(x)y^i=0$  the map $\CC
\rightarrow \CC^d$ given by $x \rightarrow
(a_{d-1}(x),\cdots,a_0(x))$. The restriction of this map to the complement to the set of critical values of projection of this curve
onto $x$-plane takes it to the space of the coefficients of
polynomials in one variable without multiple roots i.e. the
complement in $\CC^d$ to the discriminant hypersurface
$Discrim$. 
This complement is the base of a locally trivial fibration of the complement
in $\CC^{d+1}$ to the hypersurface given by equation
$y^d+a_{d-1}y^{n-1}+\cdots+a_0 \in \CC^{d+1}$ onto $\CC^d$ with
coordinates $(a_{d-1},\cdots, a_0)$. The action of the fundamental
group $\pi_1(\CC^d\setminus Discrim,b)$, which is isomorphic to Artin's braid group,
is induced by its action on the fundamental group of the fiber of this
fibration over the base point $b$. If $b \in \RR^d$ then the complex
conjugation acts on the braid group (since discriminant hypersurface 
is defined over $\RR$) but the specific form of this action on generators
depends on the choice of $b$.
This complex conjugation on $B_d$ is given by (\ref{unrealconj}) and
(\ref{realconj}) with $d=N$ and depends on the number of real roots of $y^d+a_{d-1}(b)y^{n-1}+\cdots+a_0(b)$. 
In particular, if the number of real roots is at most one, the action is
given by (\ref{conjugationformulas}) and if all roots are
real then one has $s_i \rightarrow s_i^{-1}$ for all $i$ (as in \cite{cohensuciu}).
\end{remark}

 Definition \ref{conjtriv} implies 
 the following relation between the braids that the braid monodromy
assigns to conjugate loops:

\begin{prop}\label{conjugatebraidRe} Let $\beta:
  \pi_1(N_{\CC}\setminus Cr) \rightarrow B_d$ be the homomorphism of braid monodromy
  of a plane curve over $\RR$ and let $\bar \gamma$ be 
 the complex conjugate of a loop $\gamma \in \pi_1(N_{\CC} \setminus Cr,b)$. 
Then 
$$\beta(\bar \gamma)=\overline{\beta(\gamma)}$$
and depending on ${\rm Card} (p^{-1}(b)\cap C_{\RR})$ the action of the 
complex conjugation on a factorization of $\beta(\gamma)$ is given by
(\ref{unrealconj}),(\ref{realconj}),(\ref{specialconj}).
In particular, if
 $p_{\CC}^{-1}(b)$ has at most one real point 
then 
   \begin{equation}\label{braidmonodromyconj} 
      \beta(\bar \gamma)=(\Re rev(\beta(\gamma)))^{-1}
    \end{equation}
 If all points of $p_{\CC}^{-1}(b)$ are real then one has:
\begin{equation}\label{braidmonodromyconjreal} 
      \beta(\bar \gamma)=(rev(\beta(\gamma)))^{-1}
    \end{equation}
i.e. coincides with the outer automorphism $\epsilon_d$ used in
\cite{dyer} \cite{cohensuciu}.
\end{prop}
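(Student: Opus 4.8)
The plan is to establish the identity $\beta(\bar\gamma)=\overline{\beta(\gamma)}$ directly from the description of the conjugate trivialization in Definition \ref{conjtriv}, and then to read off the three explicit forms of the action on a factorization from the formulas (\ref{unrealconj}), (\ref{realconj}), (\ref{specialconj}) of Section \ref{complconjsection}, specializing to (\ref{braidmonodromyconj}) and (\ref{braidmonodromyconjreal}) by a short purely algebraic manipulation of the operators $\Re$ and $rev$. To set up the geometric core: since $b\in\RR$, the fiber $L_{\CC,b}=p_{\CC}^{-1}(b)$ is invariant under the complex conjugation $\kappa\colon x\mapsto\bar x$, which carries the marked set $C_{\CC}\cap L_{\CC,b}$ to itself and is an orientation-reversing involution of the conjugation-invariant disk $D^2$. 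Let $\mu_\gamma$ denote the monodromy diffeomorphism of the pair $(L_{\CC,b},C_{\CC}\cap L_{\CC,b})$ attached to $\Phi$, so that $\beta(\gamma)=[\mu_\gamma]$. Definition \ref{conjtriv} provides the conjugate trivialization $\bar\Phi(t,z)=\overline{\Phi(t,z)}$ over $\bar\gamma$, whose monodromy sends $x$ to $\overline{\Phi(1,\tilde z)}$ with $\Phi(0,\tilde z)=\bar x$. Writing $z(x)$ for the solution of $\Phi(0,z)=x$, one has $\tilde z=z(\bar x)$ and $\Phi(1,\tilde z)=\mu_\gamma(\bar x)$, so the monodromy diffeomorphism $\mu_{\bar\gamma}$ of $\bar\Phi$ satisfies $\mu_{\bar\gamma}(x)=\overline{\mu_\gamma(\bar x)}$, i.e. $\mu_{\bar\gamma}=\kappa\circ\mu_\gamma\circ\kappa$. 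Because $\kappa$ is orientation-reversing, this conjugate is again orientation-preserving and hence defines an element of $B_d$.

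To identify $[\mu_{\bar\gamma}]$ with the outer automorphism $\beta\mapsto\bar\beta$ of Section \ref{complconjsection}, I would pass to the induced action on the free group $\pi_1(L_{\CC,b}\setminus(C_{\CC}\cap L_{\CC,b}),b)$. On this group $\kappa$ induces $x\mapsto\bar x$ in the sense normalized at the opening of Section \ref{complconjsection}, and $\kappa$ fixes the real base point $b$; by functoriality $(\mu_{\bar\gamma})_\ast=\kappa_\ast\,(\mu_\gamma)_\ast\,\kappa_\ast$, so for every $x$ one computes $(\mu_{\bar\gamma})_\ast(x)=\overline{\beta(\gamma)(\bar x)}$, which is exactly $\overline{\beta(\gamma)}(x)$ by the defining relation (\ref{conjugationdefinition}). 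Since $B_d$ embeds faithfully into the automorphism group of this free group, equality of the induced automorphisms forces $\beta(\bar\gamma)=[\mu_{\bar\gamma}]=\overline{\beta(\gamma)}$, which is the first assertion.

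The remaining statements are then formal. Writing the factorization of $\beta(\gamma)$ as a word in the standard generators $s_1,\dots,s_{d-1}$ and applying $\beta\mapsto\bar\beta$, which preserves products, replaces each letter $s_i$ by $\bar s_i$; the effect is exactly (\ref{unrealconj}), (\ref{realconj}) or (\ref{specialconj}), the relevant case being dictated by $\Card(p^{-1}(b)\cap C_{\RR})$, that is, by the number of real points in the fiber. In the generic case $\Card(p^{-1}(b)\cap C_{\RR})\le 1$ one has $\bar s_i=\Re(s_i)^{-1}$ by (\ref{conjugationformulas}); I would then observe that $\beta\mapsto(\Re\,rev(\beta))^{-1}$ is a homomorphism, since $\Re\circ rev$ is an anti-homomorphism and inversion is a second anti-homomorphism, and that it agrees with $\beta\mapsto\bar\beta$ on each generator, so the two automorphisms coincide, yielding (\ref{braidmonodromyconj}). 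The all-real case is identical with $\Re$ replaced by the identity: $\beta\mapsto(rev(\beta))^{-1}$ is the automorphism agreeing with $s_i\mapsto s_i^{-1}$ on generators, giving (\ref{braidmonodromyconjreal}) and its identification with $\epsilon_d$.

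The main obstacle I anticipate is bookkeeping rather than conceptual: one must check that the orientation-reversing involution $\kappa$ induces on the free group precisely the involution $x\mapsto\bar x$ normalized in Section \ref{complconjsection}, including the base-point convention $b\in\RR$ and the chosen reference interval $I$, so that the geometric conjugation $\mu_{\bar\gamma}=\kappa\circ\mu_\gamma\circ\kappa$ matches the algebraic relation (\ref{conjugationdefinition}) on the nose (in particular one must fix, once and for all, the convention identifying the mapping class $[\mu_\gamma]$ with its action $\beta(\gamma)$ on the free group). Once the generators $x_i$ and the arc $I$ are fixed consistently across the two sections, this compatibility is routine, and the rest reduces to the formal coincidence of the outer automorphism with the operator $(\Re\,rev(\cdot))^{-1}$.
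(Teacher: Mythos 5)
Your proposal is correct and follows essentially the same route as the paper: the paper likewise interprets $\beta(\bar\gamma)$ as the composition of the conjugation, the automorphism of the free group $\pi_1(p_{\CC}^{-1}(b)\setminus p_{\CC}^{-1}(b)\cap C,b)$ given by $\beta(\gamma)$, and the conjugation again, identifies this with $\overline{\beta(\gamma)}$ via (\ref{conjugationdefinition}), and then reads off (\ref{braidmonodromyconj}) and (\ref{braidmonodromyconjreal}) from the identities (\ref{conjugationformulas}), (\ref{unrealconj}), (\ref{realconj}). Your additional details (the explicit computation $\mu_{\bar\gamma}=\kappa\circ\mu_\gamma\circ\kappa$ from Definition \ref{conjtriv}, faithfulness of $B_d$ on the free group, and the homomorphism-vs-generators comparison for $(\Re\,rev(\cdot))^{-1}$) simply make explicit what the paper leaves implicit.
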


\begin{proof} Indeed, the braid $\beta(\bar \gamma)$ being interpreted as an
  automorphism of the free group $\pi_1(p_{\CC}^{-1}(b)\setminus p_{\CC}^{-1}(b)\cap
  C,b)$ is the composition of 
conjugations, the automorphism corresponding to $\beta$ and the
conjugation i.e. is $\bar \beta$.  The equalities 
(\ref{braidmonodromyconj}) and (\ref{braidmonodromyconjreal})
follow from the identities 
(\ref{conjugationformulas}) (or (\ref{unrealconj})) and (\ref{realconj}).
\end{proof}

Next, we will find a conjugation invariant form of the braid monodromy
factorization 
of a real curve. Singularities of such a curve $C_{\CC}$ are either the points
with real coordinates or come in complex conjugate pairs. So are the
critical values of projections on the complex locus of a $\RR$- line:
they are real values at real critical points or come as complex
conjugate pairs. Recall that the image of $c \in C$ of projection on
$x$-axis $N$ from a point in $\RR\PP^2\subset \CC\PP^2$ is a critical
value if the line through the center of projection and $c$ intersects $C$ in
fewer than $\deg C$ points and a projection from a center $p\in \CC\PP^2$ is called
generic if each line in the pencil of lines through $p$ contains at most one singular or critical point on $C$ and each critical point in the smooth locus of $C$ is the simple tangency. We call a point in $\RR\PP^2$ {\it generic} if it is 
an arbitrary point in a Zariski dense subset of $\RR\PP^2$. 

The following Proposition describes which
critical points of generic projection are unavoidable on the real part of
$x$ axis (the target of the projection map).

\begin{prop}\label{genericprojections} Let $C$ be a projective plane curve over $\RR$ transversal to the line at infinity. 
 Let, as above, $p_P: \CC\PP^2\setminus P \rightarrow N_{\CC}$ be
  a projection from a point $P\in \RR\PP^2$ onto a fixed line $N$. 
 If $P$ is generic then the only critical points of $p_P$ on the real
 locus $\RR\PP^1 \subset N_{\CC}$ are either singular points of $C$ with 
 coordinates both being real or images of critical points of restriction of $p_P$ on
 the real locus $C_{\RR}$ of $C$. 
\end{prop}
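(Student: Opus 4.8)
```latex
\begin{proof}[Proof proposal]
The plan is to analyze the critical points of $p_P$ lying over the real locus $\RR\PP^1 \subset N_{\CC}$ directly, distinguishing them according to whether the point of $C$ causing the drop in fiber cardinality is real or not, and to show that the complex (non-real) possibilities are excluded by the genericity of $P$.

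First I would recall that $c \in N_{\CC}$ is a critical value precisely when the line $\ell_c = p_P^{-1}(c) \cup \{P\}$ in the pencil $\cL_{\CC}$ meets $C_{\CC}$ in fewer than $d$ points, i.e. $\ell_c$ is either tangent to $C$ at a smooth point or passes through a singular point. Suppose now that $c$ is such a critical value and, crucially, that $c \in \RR\PP^1$. Since $P \in \RR\PP^2$ and $c$ is real, the whole line $\ell_c$ is defined over $\RR$, hence is invariant under complex conjugation. The key step is then to observe that the critical point $q \in C_{\CC} \cap \ell_c$ responsible for the degeneracy, being a geometric feature (a singular point, or a point of tangency with the real line $\ell_c$), is carried by conjugation to another critical point of the \emph{same} type on the \emph{same} real line $\ell_c$. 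Because $P$ is generic, each line of the pencil contains at most one singular or critical point of $C$; therefore $q$ must be conjugation-fixed, i.e. $\bar q = q$, which forces $q \in C_{\RR}$ to have real coordinates.

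Next I would split into the two resulting cases. If $q$ is a singular point of $C$, then since $\bar q = q$ it is a singular point with both coordinates real, giving the first alternative of the statement. If instead $q$ is a smooth point of $C$ at which $\ell_c$ is tangent (a simple tangency, by genericity), then $q \in C_{\RR}$ and the tangent line $\ell_c$ to $C$ at $q$ passes through $P$; I would argue that $q$ is then exactly a critical point of the restriction $p_P|_{C_{\RR}} : C_{\RR} \to \RR\PP^1$. Indeed, criticality of this real map at $q$ means the real tangent direction to $C_{\RR}$ at $q$ lies along the real line through $P$ and $q$; since the complex tangent line at the smooth real point $q$ is the complexification of the real tangent line, tangency of $\ell_c$ to $C_{\CC}$ at the real point $q$ is equivalent to $q$ being a critical point of $p_P|_{C_{\RR}}$. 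This yields the second alternative and exhausts the possibilities.

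I expect the main obstacle to be the careful handling of the tangency case, specifically the equivalence between complex tangency of $\ell_c$ to $C_{\CC}$ at a real smooth point and real criticality of $p_P|_{C_{\RR}}$ there, together with verifying that genericity of $P$ really does rule out a conjugate pair of critical points sharing the single real line $\ell_c$. The latter requires phrasing genericity so that ``at most one singular or critical point per line of the pencil'' is an open dense condition on $P \in \RR\PP^2$; a conjugate pair $q, \bar q$ with $q \neq \bar q$ lying on the conjugation-invariant line $\ell_c$ would violate this, so the argument is to show such configurations are confined to a proper Zariski-closed subset of centers $P$ and hence avoided by a generic $P$.
\end{proof}
```
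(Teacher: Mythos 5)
Your proposal is correct, and it turns on the same key mechanism as the paper's proof: since $P$ and $c$ are both real, the pencil line $\ell_c$ is conjugation-invariant, so conjugation carries a singular or tangency point on it to another point of the same type on the same line, and genericity then forces that point to be conjugation-fixed, hence real. Where you differ is in how the genericity is cashed out. You invoke the per-line property of generic projections (``at most one singular or critical point on each line of the pencil'') and get reality of $q$ by contradiction, deferring to a flagged ``obstacle'' the fact that real centers with this property form a Zariski-dense subset of $\RR\PP^2$. The paper goes the other way: it never invokes the per-line property as such, but instead enumerates the bad real lines concretely --- a real line tangent to $C_{\CC}$ at a non-real smooth point is necessarily a bitangent (tangent at $q$ and at $\bar q$), and through a non-real singular point $q$ there passes exactly one real line (the one joining $q$ and $\bar q$) --- so the bad lines are finitely many, and, using an incidence correspondence $\I \subset N_{\CC}\times \RR\PP^2$, it picks $P$ avoiding all of them. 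That enumeration is precisely the resolution of your flagged step: the centers violating your condition on a real pencil line lie on finitely many real lines, a proper Zariski-closed subset of $\RR\PP^2$, so the density you need holds and your argument closes; without saying this, your proof is a sketch at exactly the point where the paper does its work. On the other side of the ledger, your write-up makes explicit the equivalence between complex tangency of $\ell_c$ at a real smooth point and criticality of $p_P\vert_{C_{\RR}}$ there (via the complexified real tangent line), which the paper leaves implicit.
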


\begin{proof} We will work in a coordinate system such that the projection from $P$ is just the 
projection $(x,y) \rightarrow x$ onto the $x$-axis.
First, notice that if a 
 coordinate system in $\RR\PP^2$  is generic then each singular point 
 either has both coordinates real or both coordinates have non-zero
 imaginary parts. We assume that $C$ is in a such coordinate system. 
 Then note also that the number of real lines through a point
  $(z,w,1) \in \CC^2 \subset \CC\PP^2$ is either infinite (if $(z,w)
  \in \RR^2)$ or is either 1 ($z,w$ are on a line $\CC=\RR^2$ defined
  over $\RR$) or zero. Consider the incidence correspondence 
  $\I \subset N_{\CC}\times \RR\PP^2$ 
 consisting of pair $(a,P)$ such 
  that $a$ is the image of a critical point of projection onto
  $N_{\CC}$ from $P$. The projection $\I \rightarrow \RR\PP^2$ is a
  finite cover and $\I$ is a real two-dimensional manifold. 
  Each real tangent transversal to the real locus of $C$ 
  either is a bitangent or contains singular points with one of the
  coordinates being not in $\RR$. There are no points in the latter class 
by genericity assumption and only a finite set of points in the former
class. Taking $P\in \RR\PP^2$ which pre-image in $\I$ has an empty intersection with this
finite set in $\I$ we get the required projection. 
\end{proof}

We select  coordinates in $\CC\PP^2$ so  
that the base point of the pencil is at infinity and the lines of the
pencils are the lines $x=c$ (i.e. the center of projection is
$(0,1,0$)) and we have the projection $p: \CC^2 \rightarrow \CC_x$ onto
the $x$ axis given by $y=0$. Moreover,  the trivialization of $p$ over
$\CC_x=\{y=0, z \ne 0 \}$ is given by projection $(x,y) \rightarrow y$
onto  $y$ axis $\CC_y$.
 
Let $Cr \subset \CC_x$ be the critical set of the projection and $N={\rm
  Card} Cr$. Let us view $Cr$ as a subset $\P_N$ discussed in Section \ref{complconjsection},
 select an
order in $Cr$,
a base point $b$, and a good ordered system of generators in $\pi_1(\CC
\setminus Cr,b)$ as described there.
We call this a complete good ordered system of generators {\it
  compatible 
with the real structure} and denote its elements\footnote{the order
of the loops in this system is from the left to the right but subscripts
specify the points in the set $Cr$ with the ordering described above.}
\begin{equation}\label{goodordered}
     \gamma_1, \cdots \gamma_h,\gamma^r_l, \cdots,\gamma^r_1,\bar \gamma^{-1}_h,....\bar \gamma^{-1}_1
\end{equation}
Here $\bar \gamma$ is the class of the loop containing the image of the conjugation map
$H^+\rightarrow H^-$ applied loop to $\gamma$. 

Finally, we will denote by $Dc\subset \CC_x$ be a  closed subset
bounded
by a loop with base point $b$ and such that $Dc \cap Cr$ is the set
of {\it real} critical values.

\begin{dfn}\label{braidsrealstructure} The factorization 
$$\beta_1\cdots \cdots \beta_N=\Delta^2, \ \ \ \ \  N=2h+l$$ 
where the braids $\beta_i$ are the images of the braid monodromy
homomorphism $\pi_1(\CC_x\setminus Cr,b) \rightarrow B_d$ 
corresponding to the loops (\ref{goodordered})
will be called {\it compatible with the real structure}.

The product of the braids corresponding to the loops $\gamma_1,\cdots,
\gamma_h$ will be denoted $\B_{H^+}$, the product of the braids
corresponding to $ \gamma^r_{l}, \cdots,\gamma^r_1$ we denote
$\B_{\RR}$, and the product of the
     braids corresponding to the remaining loops in this system we
     denote $\B_{H^{-}}$ so that 
$$\Delta^2=\B_{H^+}\B_{\RR}\B_{H^-}$$ 
\end{dfn}

\begin{thm}\label{maindecomp} Let $C$ be a projective plane curve
  over $\RR$. Let $p: \CC^2\rightarrow
  \CC=N_{\CC}$ be a projection of the affine part of $\CC\PP^2$ with the line at
  infinity being transversal to $C_{\CC}$. Let $b$ be the base point
  in the real locus of $N_{\CC}$  such that ${\rm Card}
  p^{-1}_{\CC}(b) \cap C_{\RR} \le 1$. 
The braid monodromy factorization corresponding to a  good ordered system of generators of
  $\pi_1(N_{\CC}\setminus Cr,b)$ compatible with real structure induces decomposition
\begin{equation}\label{braiddecomposition}
    \Delta^2= {\cal B}_{H^+} \cdot {\cal B}_{\RR} \cdot \Re(rev({\cal  B}_{H^+})) 
\end{equation}
where the braids $\B_{H^+},\B_{\RR}$ are as in Definition \ref{braidsrealstructure}.
\end{thm}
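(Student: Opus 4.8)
The plan is to reduce the statement to a single identification, namely $\B_{H^-}=\Re(rev(\B_{H^+}))$, and then read off (\ref{braiddecomposition}) from the factorization $\Delta^2=\B_{H^+}\,\B_{\RR}\,\B_{H^-}$ already fixed in Definition \ref{braidsrealstructure}. Thus no new factorization needs to be produced: the whole argument is the transport of the block $\B_{H^+}$ over $H^+$ to the block $\B_{H^-}$ over $H^-$ by complex conjugation, the middle block $\B_{\RR}$ being left untouched since the theorem makes no claim about it.

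First I would observe that the hypothesis ${\rm Card}\,p_{\CC}^{-1}(b)\cap C_{\RR}\le 1$ is exactly the condition placing us in the regime (\ref{conjugationformulas}), so that Proposition \ref{conjugatebraidRe} applies in its sharp form (\ref{braidmonodromyconj}): for every class $\gamma\in\pi_1(N_{\CC}\setminus Cr,b)$ one has $\beta(\bar\gamma)=(\Re\,rev(\beta(\gamma)))^{-1}$. Applying this to each generator $\gamma_i$ ($i=1,\dots,h$) spanning $H^+$ and using that $\beta$ is a homomorphism, I obtain $\beta(\bar\gamma_i^{-1})=\beta(\bar\gamma_i)^{-1}=\Re\,rev(\beta(\gamma_i))$, so that each factor of $\B_{H^-}$ is the image under $\Re\circ rev$ of the corresponding factor of $\B_{H^+}$.

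Next I would assemble the block. In the good ordered system (\ref{goodordered}) the loops over $H^-$ are listed as $\bar\gamma_h^{-1},\dots,\bar\gamma_1^{-1}$, so that $\B_{H^-}=\beta(\bar\gamma_h^{-1})\cdots\beta(\bar\gamma_1^{-1})=\Re\,rev(\beta(\gamma_h))\cdots\Re\,rev(\beta(\gamma_1))$. Since $\Re$ is a homomorphism and $rev$ an anti-homomorphism, the composite $\Re\circ rev$ is an anti-homomorphism of $B_d$, hence it carries $\B_{H^+}=\beta(\gamma_1)\cdots\beta(\gamma_h)$ to $\Re\,rev(\beta(\gamma_h))\cdots\Re\,rev(\beta(\gamma_1))$, which is precisely the expression just computed for $\B_{H^-}$. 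Substituting $\B_{H^-}=\Re(rev(\B_{H^+}))$ into $\Delta^2=\B_{H^+}\,\B_{\RR}\,\B_{H^-}$ yields (\ref{braiddecomposition}).

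The point requiring the most care, and the only one beyond formal manipulation, is the matching of orderings and orientations: one must verify that the loops labelled $\bar\gamma_i^{-1}$ in the system compatible with the real structure really are the conjugates of the $\gamma_i$ (the inverse accounting for the orientation reversal induced by conjugation), and that the convention of listing them with indices decreasing from $h$ to $1$ is exactly the reversal demanded by the anti-homomorphism $\Re\circ rev$. This is governed by the geometric description of good ordered systems in Section \ref{complconjsection}, in particular by the way conjugation reverses the counterclockwise order of the loops around the base point $b$; once that compatibility is checked, the algebraic identity above closes the argument.
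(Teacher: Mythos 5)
Your proposal is correct and takes essentially the same route as the paper's own proof: both reduce the theorem to the single identity $\B_{H^-}=\Re(rev(\B_{H^+}))$, obtained by applying Proposition \ref{conjugatebraidRe} in the form (\ref{braidmonodromyconj}) to each loop $\bar\gamma_i^{-1}$ (so that $\beta(\bar\gamma_i^{-1})=\Re\,rev(\beta(\gamma_i))$) and then using that $\Re$ is a homomorphism and $rev$ an anti-homomorphism to reassemble the product, before substituting into $\Delta^2=\B_{H^+}\B_{\RR}\B_{H^-}$ from Definition \ref{braidsrealstructure}. Your closing remark about checking the ordering and orientation conventions is precisely the bookkeeping already built into the construction of the system (\ref{goodordered}) in Section \ref{complconjsection}, so no gap remains.
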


\begin{proof}  Since $\B_{H^-}=\bar \gamma_h^{-1}...\bar
\gamma_1^{-1}$, using Proposition \ref{conjugatebraidRe} this braid
can be written as:
$$(\Re (rev(\beta(\gamma_h)))^{-1})^{-1}  \cdots (\Re (rev(\beta(\gamma_1)))^{-1})^{-1}=
\Re(rev(\beta(\gamma_h)))....\Re(rev(\beta(\gamma_1)))$$
$$=\Re(rev(\beta(\gamma_h))rev(\beta(\gamma_{h-1}))\cdots
rev(\gamma_1))=
\Re(rev(\beta(\gamma_1)\cdots \beta(\gamma_h)))=$$ $$Re(rev(\B_{H^+}))$$
as claimed.
\end{proof}

\begin{rem}\label{juananswer} We would like to describe the braid $\B_{H^+}$ (and
  hence $\B_{H^-}$) in
  terms of the braid $\B_{\RR}$ corresponding to the real part of the critical
  locus i.e. to solve in the braid group the equation (\ref{braiddecomposition}).
 Unfortunately, as pointed out to the author by the referee, the constraint of the Theorem \ref{maindecomp} is not sufficient 
 for recovering the braids $\B_{H^{\pm}}$ and $\B_{\RR}$ even in the simples cases.
 Consider the case when $\B_{\RR}=1$, corresponding to the case of real curves without real points and for which the projection has no real critical values i.e. 
 \begin{equation}\label{maineqdelta2}
            \B \Re rev(\B) =\Delta^2
 \end{equation}
 Since $\Re (rev (\Delta))=\Delta$
  (cf. \cite{garside} Lemma 3), one expects that
 in decomposition (\ref{braiddecomposition}) $\B_{H^+}=\B_{H^-}=\Delta$:

However, in $B_4$, there is another solution to (\ref{braiddecomposition}) i.e. $\Gamma \Re rev \Gamma=\Delta^2$. 
\begin{equation}
  \Delta s^{-1}_1s_3 \Re(rev \Delta s_1^{-1}s_3)=\Delta s_1^{-1}s_3s_1s_3^{-1}\Delta=\Delta^2
\end{equation}
Note that  
\begin{equation}
\Delta s^{-1}_1s_3 =(s_1s_2s_3)^2 \ \ \ and \ \ \  s_3(s_1s_2s_3)^2s_3^{-1}=\Delta
\end{equation}
Nevertheless, we show below that for the algebraic curves in 
question the braids $\B_{H^{\pm}}$ are the a conjugates of "obvious" solutions. It would be interesting to find 
additional constraints on the braids $\B_{H^{\pm}},\B_{\RR}$ that allow to streamline their calculation and to understand the 
geometric significance of all solutions to (\ref{braiddecomposition}).
The equation (\ref{braiddecomposition}) has a certain similarity with the problem of finding roots in elements in the braid group
(cf. \cite{gm2003}) but a direct connection is not clear.

In response to the author's query, Juan Gonzalez-Meneses  informed the author that he obtained classification results
regarding the solutions of the equation (\ref{maineqdelta2}) for each of the Thurston-Nielsen classes: cf. \cite{gm2023}. 
\end{rem}

\section{Presentations of fundamental groups of real curves.}\label{presentations}

Recall that van Kampen's theorem (cf. \cite{vankampen},\cite{moishezon})
gives the following presentation in
terms of the braids $\beta_i$ described in definition
(\ref{braidsrealstructure}). 
\begin{equation}\label{vankampen} 
    \pi_1(\CC^2\setminus C)=\{x_1 \cdots x_d\vert \beta_i(x_j)=x_j\}
    \ \ \ \ j=1,\cdots d, i=1,\dots,\Card Cr
\end{equation}

In this section, we show that for curves over $\RR$, the fundamental
groups of the complement to $C_{\CC}$  are quotients of geometrically
defined and depending on real structure groups admitting van Kampen
type presentations but requiring fewer
relations than in (\ref{vankampen}).

With notations as in Section
  \ref{braidmonodromy}, let $Cr_{\RR} \subset Cr \subset N_{\CC}$ be
  the subset of {\it real} critical values of projection $p$. Recall that $Dc$
  is a disk in $N_{\CC}$ bounded by a loop based at $b$ and such that
  $Dc \cap Cr=Cr_{\RR}$.  

\begin{thm} \label{surjection0}
\begin{enumerate} 

\item  The group $\pi_1(p^{-1}(H^+\bigcup Dc) \setminus C,b)$ has a presentation:
\begin{equation} 
\pi_1(p^{-1}(H^+\bigcup Dc) \setminus C,b)=\{x_1,..,x_d \vert \beta_i(x_j)=x_j\}  
\ \ \ \ \ j=1, \cdots d, i=1,..,h+l 
\end{equation} \label{first}
\item Inclusion $p^{-1}(H^+\bigcup Dc)\setminus C  \rightarrow \CC^2\setminus C$ induces the surjection:
\begin{equation}
    \pi_1(p^{-1} (H^+\bigcup Dc \setminus C),b) \rightarrow
    \pi_1(\CC^2\setminus C,b) \rightarrow 1
\end{equation} \label{second}
\end{enumerate}       
\end{thm}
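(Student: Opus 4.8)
The plan is to derive part (1) by running the very argument that produces the van Kampen presentation (\ref{vankampen}), but with the full base $\CC_x$ replaced by the region $B=H^+\cup Dc$. First I would reduce to a compact situation: since $C$ is transversal to the line at infinity, there are no critical values in a neighborhood of infinity and the fibration is trivial there, so intersecting $B$ with a large disk containing $Cr$ changes neither $\pi_1(p^{-1}(B)\setminus C,b)$ nor the set of critical values it meets. I would then record that $B$ is contractible: the open upper half plane is convex and $Dc$ is a disk whose boundary passes through the real base point $b$ and which reaches below $\RR\PP^1$ only far enough to enclose the real critical values, so $H^+\cup Dc$ deformation retracts to a point. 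Consequently $\pi_1(B\setminus Cr,b)$ is free, and the $h+l$ loops $\gamma_1,\dots,\gamma_h,\gamma^r_l,\dots,\gamma^r_1$ of the good ordered system (\ref{goodordered}) — exactly those encircling the critical values lying in $B$, namely the $h$ values in $H^+$ and the $l$ real values in $Dc$ — form a free basis. The loops $\bar\gamma_i^{-1}$ around the values in $H^-$ never enter, as those points lie outside $B$.

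Next I would invoke local triviality. Over $B\setminus Cr$ the restriction of $p$ with $C$ removed is a locally trivial fibration with fiber $p^{-1}(b)\setminus C\simeq\CC\setminus\{d\text{ points}\}$, whose fundamental group is free on the meridians $x_1,\dots,x_d$. Filling each critical value $c_i\in B$ back in — passing from $p^{-1}(B\setminus Cr)\setminus C$ to $p^{-1}(B)\setminus C$ — imposes, by the same local computation that derives (\ref{vankampen}), precisely the relation $\beta_i(x_j)=x_j$ attached to the braid of $\gamma_i$. Because $B$ is simply connected no further relations survive, and the only relations present are those indexed by the $h+l$ critical values of $B$. This is the asserted presentation; the argument is word for word the classical one (cf. \cite{vankampen},\cite{moishezon}) with $\CC_x$ replaced by $B$.

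For part (2) I would argue that source and target are generated by the same meridians and that the inclusion respects this. The fiber $p^{-1}(b)\setminus C$ sits inside both $p^{-1}(B)\setminus C$ and $\CC^2\setminus C$, and the inclusion $p^{-1}(B)\setminus C\hookrightarrow\CC^2\setminus C$ factors the inclusion of that fiber. By (\ref{vankampen}) the group $\pi_1(\CC^2\setminus C,b)$ is generated by the images of $x_1,\dots,x_d$; by part (1) so is $\pi_1(p^{-1}(B)\setminus C,b)$. Hence the image of the induced homomorphism contains every generator of the target, and the map is onto.

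The main obstacle I anticipate is the careful bookkeeping in the first two steps: verifying that $B=H^+\cup Dc$ is genuinely simply connected and, more delicately, that the selected $h+l$ loops freely generate $\pi_1(B\setminus Cr,b)$ compatibly with the global system (\ref{goodordered}), so that the relations are exactly the braids $\beta_i$ with $i=1,\dots,h+l$ and not conjugates of them. Once the sub-base is shown to be simply connected with the correct distinguished loops, both the presentation and the surjectivity follow from the standard fibration and van Kampen machinery with no essentially new input.
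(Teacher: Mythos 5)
Your proposal is correct and reaches the statement by the same van Kampen philosophy as the paper, but the two proofs differ in what is taken as the key input, and the difference is exactly at the point you flag as delicate. The paper's proof of part (1) does not re-run the Zariski--van Kampen fibration argument over $B=H^+\cup Dc$: it invokes the retraction result of \cite{me2complexes}, by which the complement (and likewise $p^{-1}(B)\setminus C$) deformation retracts onto the union of the $p$-preimages of the disks bounded by the loops of the good ordered system, each such preimage having fundamental group $\{x_1,\dots,x_d \mid \beta_j(x_i)=x_i,\ i=1,\dots,d\}$; part (1) is then the van Kampen theorem for a union of spaces, so the contractibility of $B$ and the freeness of $\pi_1(B\setminus Cr,b)$ on the $h+l$ distinguished loops --- the bookkeeping you single out as the main obstacle --- are absorbed into the cited retraction and never have to be verified separately. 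Your route (local triviality over $B\setminus Cr$, free base group, then filling in the critical fibers one at a time) is self-contained and equivalent, at the price of carrying out that verification, including the check that the relations one obtains are attached to the braids $\beta_i$ of the chosen loops and not to conjugates of them. For part (2) the paper simply observes that the presentation (\ref{vankampen}) of $\pi_1(\CC^2\setminus C,b)$ consists of the relations of part (1) together with the additional ones coming from the critical values in $H^-$, so the target is a quotient; your argument --- that the meridians in the common fiber $p^{-1}(b)\setminus C$ generate both groups and the inclusion matches them up --- is the same observation made explicit, with the minor advantage that it does not use the full strength of the presentation in part (1), only the fact that the meridians generate.
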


\begin{proof}  Recall that each loop in a good ordered system of generators
  bounds a disk containing a single critical value of projection
  $p$. The complement
  $\CC^2\setminus C$ can be retracted onto the union of
  $p$-pre-images of disks bounded by the loops in a good ordered system of
  generators (cf.  \cite{me2complexes}) and the fundamental group of
  the preimage of a disk corresponding to critical value $j$ has
  presentation $\{x_1,\cdots x_d \vert \beta_j(x_i)=x_i, i=1\cdots d \}$.
 Part  (\ref{first})
  follows from the van Kampen theorem about a union of spaces (cf. \cite{spanier}).

Part (\ref{second}) is a corollary of  part (\ref{first}) since the
set of relations of $\pi_1(\CC^2\setminus C,b)$ consists of the
same relations as the relations for $\pi_1(\pi^{-1} (H^+\bigcup Dc \setminus C),b)$ and
additional relations (corresponding to the critical points of
projection of $C$ in the lower half plane.
\end{proof}

\begin{prop}\label{presentations} Let $\beta_1,...,\beta_r \in B_d$ be a finite set of
  braids and $x_1,\cdots, x_d$ be a system of generators of a free
  group $F_d$.
 Let $\beta=\beta_1 \cdots \beta_r$ be
  their product, $G(\beta_1, \cdots \beta_r)$ (resp. $G(\beta)$)  be the
  quotients of the free group $F_d$ by the normal subgroup generated by elements
$\beta_i(x_j)x_j^{-1}, i=1,\cdots r, j=1,\cdots, d$ (resp. the relations
$\beta(x_j)x_j^{-1}, j=1,\cdots, d$). Then one has surjection:
\begin{equation}\label{surjection1}
      G(\beta) \rightarrow G(\beta_1,\cdots ,\beta_r) \rightarrow 1
\end{equation}
\end{prop}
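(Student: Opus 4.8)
The plan is to show that the defining relations of $G(\beta)$ are already consequences of the defining relations of $G(\beta_1,\cdots,\beta_r)$, so that the identity map of $F_d$ descends to a surjection in the claimed direction. I write $R$ for the normal closure in $F_d$ of the set $\{\beta(x_j)x_j^{-1}:j=1,\cdots,d\}$ and $R'$ for the normal closure of $\{\beta_i(x_j)x_j^{-1}:i=1,\cdots,r,\ j=1,\cdots,d\}$, so that $G(\beta)=F_d/R$ and $G(\beta_1,\cdots,\beta_r)=F_d/R'$. Since both quotient maps are induced by the identity of $F_d$, it suffices to prove the inclusion $R\subseteq R'$; the natural projection (third isomorphism theorem, using $F_d/R'\cong (F_d/R)/(R'/R)$) then furnishes the desired surjection $F_d/R\to F_d/R'$.

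The key step is the observation that each individual braid becomes invisible after projecting to $G(\beta_1,\cdots,\beta_r)$. Let $\pi:F_d\to F_d/R'$ denote the quotient map. By definition of $R'$ one has $\pi(\beta_i(x_j))=\pi(x_j)$ for all $i,j$. First I would note that $\pi\circ\beta_i$ and $\pi$ are both group homomorphisms $F_d\to F_d/R'$ (the former because $\beta_i$ is an automorphism of $F_d$ composed with a homomorphism), and that they agree on the free generators $x_1,\cdots,x_d$. By the universal property of the free group $F_d$, a homomorphism out of $F_d$ is determined by its values on a generating set, whence $\pi\circ\beta_i=\pi$ for every $i$.

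Next I would compose. Since $\beta=\beta_1\cdots\beta_r$ acts on $F_d$ as the corresponding composite of automorphisms, I would apply the identities $\pi\circ\beta_i=\pi$ one factor at a time, peeling off the braids successively, to conclude $\pi\circ\beta=\pi$ (this is independent of the composition convention, as each peeling step only uses that the current leftmost factor is $\pi$). In particular $\pi(\beta(x_j))=\pi(x_j)$, that is, $\beta(x_j)x_j^{-1}\in\ker\pi=R'$ for every $j$. Since $R$ is by definition the normal closure of precisely these elements and $R'$ is normal, this yields $R\subseteq R'$, completing the argument.

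I do not anticipate a genuine obstacle; the only point deserving care is that the individual automorphisms $\beta_i$ need not preserve $R'$, so they do \emph{not} separately descend to endomorphisms of $G(\beta_1,\cdots,\beta_r)$, yet the composite statement $\pi\circ\beta=\pi$ nonetheless holds. The resolution is that the whole argument takes place at the level of the maps $\pi\circ\beta_i=\pi$ \emph{into} the quotient, which are valid regardless of whether $\beta_i(R')\subseteq R'$; no stability of $R'$ under the $\beta_i$ is ever invoked, so the apparent difficulty does not arise.
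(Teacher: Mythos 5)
Your proof is correct and takes essentially the same route as the paper: the paper's induction on $r$, which expands $\beta_r(x_j)$ letter by letter to check that $\beta'=\beta_1\cdots\beta_{r-1}$ fixes that word modulo the relation subgroup, is exactly a hand-rolled version of your key step that two homomorphisms out of $F_d$ agreeing on the free generators coincide, followed by the same peeling-off of factors. The only difference is packaging — you invoke the universal property of the free group where the paper does the word computation explicitly — so the two arguments are the same in substance.
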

\begin{proof} We shall show this by induction over $r$. Assume that for all  $1 \le j \le d$ the element
  $\beta_1\cdots \beta_{r-1}(x_j)x_j^{-1}\in F_d$ belongs
  to the normal subgroup $N_{r-1}$ of $F_d$ generated by $\beta_i(x_j)x_j^{-1}, 
  i=1,\dots , r-1$. Let $\beta'=\beta_1 \cdots \beta_{r-1}$. Then
  for any $x_j, j=1,\dots, d$
\begin{equation}\label{totalbraid}
 \beta(x_j)x_j^{-1}=\beta'(\beta_r(x_j))x_j^{-1}=\beta'(\beta_r(x_j))\beta_r(x_j)^{-1}
\beta_r(x_j)x_j^{-1}
\end{equation}
Let $\beta_r(x_j)=y_1y_2\cdots y_s$ where $y_k, k=1,\dots, s$ is one of
generators $x_1,\dots, x_d$ or their inverses, with possibly several of
$y_k$ corresponding to the same element among $x_j$. In particular, we
have $\beta'(y_k)y_k^{-1} \in N_{r-1}$ by the assumption of induction.
Then the right-hand side in (\ref{totalbraid}) can be written as:
\begin{equation}\label{totalbraids2}\beta'(y_1 \cdots y_s)y_s^{-1}\cdots y_1^{-1}\beta_r(x_j)x_j^{-1}=
\beta'(y_1)\cdots\beta'(y_s)y_s^{-1}\cdots y_1^{-1}
\beta_r(x_j)x_j^{-1} 
\end{equation}
The surjection $F_d \rightarrow F_d/N_{r-1}$ takes
$\beta'(y_s)y_s^{-1}$ to $1 \in F_d/N_{r-1}$ i.e. the last expression
in (\ref{totalbraids2}) goes to the same element as does
$\beta'(y_1)\cdots \beta'(y_{s-1})y_{s-1}^{-1}\cdots y_1^{-1}
\beta_r(x_j)x_j^{-1}$ and the latter goes to the same element as 
$\beta'(y_1) \cdots\beta'(y_{s-2})y_{s-2}^{-1}
\cdots y_1^{-1}
\beta_r(x_j)x_j^{-1}$ since $\beta'(y_{s-1})y^{-1}_{s-1}$ goes to $1
\in F_d/N_{r-1}$ and so on. In particular, the last expression in
(\ref{totalbraids2}) is the normal subgroup $N_r\subset F_d$ generated by $N_{r-1}$ and
the element $\beta_r(x_j)x_j^{-1}$ i.e. the subgroup of $F_d$
generated by the relations of $G(\beta_1, \cdots,\beta_r)$ which shows
the claim.
\end{proof}

\begin{remark} It is well known that the fundamental group of the
  complement to a singular curve is an invariant of
  equisingular isotopy of complex algebraic curves on surfaces (cf. \cite{handbook} for references therein). In the case of real curves a natural problem
  is to understand the {\it rigid} equisingular isotopy  classes 
   or at least the classes of equivariant (with respect to complex
  conjugation) equisingular isotopy (cf. \cite{rohlin} Sect. 4 for
  non-singular case). The fundamental group $\pi_1(\CC\PP^2\setminus
  C_{\CC},b), b \in \RR\PP^2$ endowed with the involution provides an
  invariant of classes of such restricted isotopy in the sense that
  for any two real curves $C_1,C_2$ isotopic via equivariant equisingular
  isotopy, there exist an isomorphism of the fundamental groups
  equivariant with respect to involutions induced by conjugation.
  For example, if $C$ is a pair of lines in $\CC^2$, then
  $\pi_1(\CC^2\setminus C,b, \ZZ)=\ZZ^2, i=1,2, b\in \RR^2$, and the involution
  induced by conjugation  has the matrix $\left(\begin{matrix} -1 & 0 \\
     0 & -1 \\ \end{matrix}\right)$ (resp. $\left(\begin{matrix} 0 & -1 \\
     -1 & 0 \\ \end{matrix}\right)$) if both lines are defined over
 $\RR$ (resp. both lines are imaginary) i.e. the additional structure
 distinguishes the classes of rigid isotopy.

For a curve over $\RR$, it follows from
Prop. \ref{conjugatebraidRe} that van Kampen presentation
(\ref{vankampen} for the braid monodromy in generators used in 
Proposition \ref{conjugatebraidRe} the automorphism of the free group
given by (\ref{unrealconj}) and (\ref{realconj})   
passes to an involution of the fundamental group of the
complement. Hence we obtained a calculation of this extra 
structure on the fundamental group. 

This involution can be encoded into an exact sequence:
\begin{equation}
  0 \rightarrow \pi_1(\CC^2\setminus C_{\CC},b) \rightarrow
  \pi^{\RR}_1(\CC^2\setminus C_{\CC},b) \rightarrow \ZZ_2 \rightarrow 0 
\end{equation}
in which the action of $\ZZ_2$ on $\pi_1(\CC^2\setminus C_{\CC},b)$ is
given by conjugation. The group in the middle 
 is a topological analog of Grothendieck's
fundamental group of a variety over $\RR$.
\end{remark}

\section{Alexander Polynomials}\label{alexpolysection}

Surjections of the previous section imply the divisibility relations
between the Alexander polynomials of the groups considered there.

Recall (cf. \cite{handbook} and the references therein)  that given a group $G$ and a surjection $\phi: G \rightarrow \ZZ$
one defines the Alexander polynomial as the order of the torsion part 
of the module over the ring of Laurent polynomials $\QQ[t,t^{-1}]$
with underlying $\QQ$- vector space being the quotient of $\Ker
\phi$ by its commutator 
with
constants extended to $\QQ$.  The module structure is defined by
requiring that the action 
of $t$ be  given  by the automorphism induced on $\Ker \phi$ by the conjugation by a lift to $G$ of the positive generator of the target $\ZZ$
of $\phi$  (this action when considered on abelianization of
$\Ker \phi$ is independent of a lift).

In terms of cyclic decomposition
\begin{equation}\Ker
\phi /(\Ker \phi)' \otimes \QQ=\oplus \QQ[t,t^{-1}]^a \oplus (\oplus_{i=1}^b
\QQ[t,t^{-1}]/(\Delta_i(t))  \ \ \ \Delta_i \vert \Delta_{i+1},
\end{equation}
this order is given by 
$$\Delta(t)=\Delta_1(t) \cdots \Delta_b(t)$$ 
if $a=0$ and $\Delta(t)=0$ if $a \ge 1$.

Recall also (cf. \cite{birman} Th. 2.2) that Artin showed that 
the fundamental group of the complement to a link in $S^3$
represented by a closed braid $\beta$ has the same presentation as the
group $G(\beta)$ from Proposition \ref{presentations} \footnote{in fact, \cite{birman}
shows the presentation of the fundamental group of link in $S^3$ is
the group $G(\beta)$ i.e. $G(\beta)=\{x_1,\cdots ,x_d \vert
\beta(x_i)x_i^{-1}, i=1,\cdots d\}$ with one relation deleted, but
there it is also pointed out that this relation is the combination of the remaining
$d-1$ relations.}

\begin{prop}\label{alexanderdivisibility} Let $G(\beta)$ and
  $G(\beta_1,\cdots,\beta_r)$ be associated
  with braids $\beta$ and $\beta_1, \cdots \beta_r$ groups
  with generators and relations described in Proposition
  \ref{presentations}. Let $\Delta_{\beta}$ and $\Delta_{\beta_1,\cdots
    ,\beta_r}$ be the Alexander polynomials of these groups relative
  to surjections $\phi_{\beta}$ and $\phi_{\beta_1,\cdots,\beta_r}$ onto $\ZZ$ which send each of their generators
  $x_1,\cdots ,x_d$ to the positive generator of $\ZZ$. Then 
$\Delta_{\beta_1,\cdots,\beta_r}$ divides $\Delta_{\beta}$. 
\end{prop}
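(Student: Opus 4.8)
The plan is to deduce the divisibility of Alexander polynomials directly from the surjection $G(\beta) \to G(\beta_1,\cdots,\beta_r)$ already established in Proposition \ref{presentations}, together with compatibility of the abelianization maps. First I would observe that the surjection of Proposition \ref{presentations} is defined at the level of the free group $F_d$ on the common generators $x_1,\cdots,x_d$: it is induced by the identity on $F_d$, since the normal subgroup $N_{\beta}$ generated by the single relation $\beta(x_j)x_j^{-1}$ is contained in the normal subgroup $N_{\beta_1,\cdots,\beta_r}$ generated by all the relations $\beta_i(x_j)x_j^{-1}$. Because both presentations use the very same generators $x_1,\cdots,x_d$, the augmentation sending each $x_j$ to the positive generator of $\ZZ$ factors through both quotients, so the surjections $\phi_{\beta}$ and $\phi_{\beta_1,\cdots,\beta_r}$ fit into a commutative triangle with the quotient map. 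This is the structural input that makes the comparison of the two Alexander modules meaningful.

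Next I would pass to the Alexander modules. Writing $K_{\beta}=\Ker \phi_{\beta}$ and $K=\Ker \phi_{\beta_1,\cdots,\beta_r}$, the surjection $q: G(\beta)\to G(\beta_1,\cdots,\beta_r)$ restricts to a surjection $K_{\beta}\to K$ (it preserves the augmentation), and hence induces a surjection of the abelianized, $\QQ$-extended modules
\begin{equation}
K_{\beta}/(K_{\beta})'\otimes\QQ \longrightarrow K/K'\otimes\QQ
\end{equation}
that is $\QQ[t,t^{-1}]$-linear, because the action of $t$ on each side is the conjugation by a common lift of the generator of $\ZZ$ and $q$ intertwines these lifts. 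The key algebraic fact is then that a surjection of finitely generated $\QQ[t,t^{-1}]$-modules forces the order of the torsion of the target to divide the order of the torsion of the source: since $\QQ[t,t^{-1}]$ is a PID, I would split both modules into free and torsion parts and invoke the standard structure theorem. One subtlety to handle cleanly is the free rank: if the source has positive free rank then $\Delta_{\beta}=0$ and divisibility is vacuous, while if the free rank of the source is zero then surjectivity bounds the free rank of the target by zero as well, so both orders are genuine polynomials and the torsion order of the target divides that of the source.

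The main obstacle I expect is exactly this passage from a module surjection to divisibility of orders when the free ranks could a priori differ. The clean way around it is to note that for a surjection $M\twoheadrightarrow M'$ of finitely generated modules over a PID, one has $\dim_{\QQ(t)}(M\otimes\QQ(t)) \ge \dim_{\QQ(t)}(M'\otimes\QQ(t))$, so the free rank cannot increase; once the free ranks are controlled, the torsion order $\mathrm{ord}(M')$ divides $\mathrm{ord}(M)$ by comparing elementary divisors, or equivalently by the multiplicativity of Fitting ideals under surjection. I would phrase the argument via the \emph{zeroth Fitting ideal} (or the product of the invariant factors) to avoid choosing bases: a surjection of modules induces an inclusion of the corresponding Fitting ideals, and the Alexander polynomial is the generator of the torsion order, i.e. essentially the $0$-th Fitting ideal modulo the free part. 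This yields $\Delta_{\beta_1,\cdots,\beta_r}\mid\Delta_{\beta}$, as claimed, and the whole argument is purely algebraic once the geometric surjection of Proposition \ref{presentations} is in hand.
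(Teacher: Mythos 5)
Your proposal follows the same skeleton as the paper's proof: the surjection of Proposition \ref{presentations} commutes with the two augmentations to $\ZZ$, hence restricts to a surjection of the kernels and induces a surjection of the $\QQ[t,t^{-1}]$-Alexander modules, from which divisibility is extracted. Where you genuinely differ is the final step. The paper closes the argument topologically: by Artin's theorem (cited from Birman), $G(\beta)$ is the fundamental group of the complement of a closed-braid link in $S^3$, whose Alexander module the paper asserts to be torsion; a surjection of torsion $\QQ[t,t^{-1}]$-modules then forces the order of the target to divide that of the source. You instead argue purely algebraically over the PID $\QQ[t,t^{-1}]$: a surjection cannot increase free rank, so either the source has positive rank, in which case $\Delta_{\beta}=0$ and divisibility is trivial, or both modules are torsion and comparison of invariant factors (equivalently, zeroth Fitting ideals) gives the divisibility. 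Your route is in fact more careful on one point: torsionness of a link module in $S^3$ is not unconditional --- it requires, for instance, non-vanishing linking numbers (Crowell's theorem, which the paper itself cites in this conditional form in the Examples section), and split closed braids such as the trivial braid give a non-torsion module and vanishing Alexander polynomial. Your case analysis covers exactly this degenerate situation, where the paper's stated justification does not literally apply even though the proposition remains true there (every polynomial divides $0$). So your argument buys self-containedness and validity for arbitrary $\beta$, while the paper's buys a geometric interpretation of the source module as a link module, which is what gets used in the subsequent explicit computations.
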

\begin{proof}  The surjection (\ref{surjection1}) and the commutative
  diagram
$$\begin{matrix}G(\beta) & & \rightarrow & & G(\beta_1,\cdots \beta_r) \\
         & \searrow & & \swarrow & \\
     & & \ZZ & &
\end{matrix}
$$
induce the surjections of $\QQ[t,t^{-1}]$-modules:
\begin{equation}\label{surjection2}
 \Ker
\phi_{\beta} /(\Ker \phi_{\beta})' \otimes \QQ \rightarrow \Ker
\phi_{\beta_1,\cdots,\beta_r} /(\Ker \phi_{\beta_1,\cdots,\beta_r})' 
\otimes \QQ \rightarrow 0
\end{equation} 
Since the left group in (\ref{surjection1}) is the fundamental group
of the complement to a link in 3-sphere for which the Alexander module
is a torsion module, it follows from (\ref{surjection2}) that the
right group in (\ref{surjection1}) has as its Alexander module a
torsion module and the claim follows.
\end{proof}

\begin{thm}\label{alexanderdivisibilitymain}
 The Alexander polynomial of $\pi_1(\CC^2\setminus C)$
  divides the Alexander polynomials of the closed braids in 3-sphere
  associated with each of the braids $\B_{H^+}, \B_{\RR},
  {\B}_{H^+}\cdot {\B}_{\RR}$ in the braid group $B_d$. 
\end{thm}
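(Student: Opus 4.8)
The plan is to reduce the statement to the divisibility criterion already established in Proposition \ref{alexanderdivisibility}, namely that a surjection of groups compatible with the augmentations onto $\ZZ$ forces the Alexander polynomial of the target to divide that of the source whenever the source has a torsion Alexander module. Concretely, for each $\B \in \{\B_{H^+}, \B_{\RR}, \B_{H^+}\cdot\B_{\RR}\}$ I would produce a single surjection
$$G(\B) \rightarrow \pi_1(\CC^2\setminus C) \rightarrow 1$$
of quotients of the free group $F_d$ on $x_1,\cdots,x_d$, where $G(\B)$ is the group of Proposition \ref{presentations}. By Artin's theorem (cited from \cite{birman}) $G(\B)$ is the fundamental group of the complement in $S^3$ of the closed braid of $\B$, so its Alexander module is torsion; the desired divisibility then follows as in Proposition \ref{alexanderdivisibility}.

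The existence of these surjections is where the structure of Definition \ref{braidsrealstructure} enters. Each of the three braids is, by construction, the product of the braids $\beta_i$ attached to a \emph{subset} of the loops in the good ordered system (\ref{goodordered}): the first $h$ loops for $\B_{H^+}$, the $l$ real loops for $\B_{\RR}$, and the first $h+l$ loops for $\B_{H^+}\cdot\B_{\RR}$. Running the inductive computation in the proof of Proposition \ref{presentations} on such a subset shows that each relator $\B(x_j)x_j^{-1}$ lies in the normal closure of the relators $\beta_i(x_j)x_j^{-1}$ indexed by that subset. Since this subset of relators is contained in the full relator set of $\pi_1(\CC^2\setminus C)=G(\beta_1,\cdots,\beta_N)$ given by the van Kampen presentation (\ref{vankampen}), every $\B(x_j)x_j^{-1}$ is trivial in $\pi_1(\CC^2\setminus C)$, and the identity on $x_1,\cdots,x_d$ descends to the surjection $G(\B)\rightarrow \pi_1(\CC^2\setminus C)$. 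For $\B=\B_{H^+}\cdot\B_{\RR}$ this map factors geometrically through $\pi_1(p^{-1}(H^+\cup Dc)\setminus C,b)$ by Theorem \ref{surjection0}, which records that the $h+l$ relators in question are exactly those of the preimage of $H^+\cup Dc$.

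Finally I would check that every map in sight commutes with the augmentations $\phi$ sending each generator $x_i$ to the positive generator of $\ZZ$: a braid carries each $x_j$ to a conjugate of a generator, so $\phi(\beta_i(x_j))=\phi(x_j)$ and all quotient maps preserve $\phi$. The surjection $G(\B)\rightarrow\pi_1(\CC^2\setminus C)$ therefore induces a surjection of $\QQ[t,t^{-1}]$-Alexander modules, and since the source module is torsion the target is torsion with order dividing it, giving $\Delta_{\pi_1(\CC^2\setminus C)}\mid \Delta_{\B}$. I expect the main obstacle to be purely bookkeeping: verifying that the subset of relators really coincides with the relator set cut out by each of the three braids and that augmentation-compatibility holds uniformly across the three cases, rather than any essentially new argument beyond Propositions \ref{presentations} and \ref{alexanderdivisibility}.
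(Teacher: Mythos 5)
Your proposal is correct and takes essentially the same route as the paper: both arguments chain a surjection $G(\B)\rightarrow G(\beta_{i_1},\cdots,\beta_{i_r})\rightarrow \pi_1(\CC^2\setminus C)$ (with the braids $\beta_i$ indexed by the relevant subset of loops) through Propositions \ref{presentations} and \ref{alexanderdivisibility}, using torsionness of the link module and compatibility of the augmentations onto $\ZZ$. The only difference is bookkeeping: you justify the second surjection by the nesting of relator sets inside the van Kampen presentation (\ref{vankampen}) and treat all three braids uniformly, whereas the paper cites the inclusion-induced surjection of Theorem \ref{surjection0} and writes out only the case $\B_{H^+}\cdot\B_{\RR}$ explicitly --- but these are the same fact.
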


\begin{proof} The theorem \ref{surjection0} identifies the group 
$\pi_1(p^{-1}(H^+\bigcup Dc) \setminus C,b)$ with the group
$G(\beta_1,\cdots \beta_{h+l})$ and Proposition \ref{alexanderdivisibility} shows
that the Alexander polynomial of this group divides the Alexander
polynomial of the closed braid $\B_{H^+}\B_{\RR}$. The surjection 
in Theorem \ref{surjection0} implies that the Alexander polynomial of
$\pi_1(\CC^2\setminus C)$ divides the Alexander polynomials of
$\pi_1(p^{-1}(H^+\bigcup Dc) \setminus C,b)$ as in the proof of 
Proposition \ref{alexanderdivisibility} which completes the proof.
\end{proof}

\section{Examples.}

In this section we discuss the braids $\B_{H^+},\B_{\RR}$
for the curves satisfying assumptions on the real part of the critical set and use them to get refined divisibility conditions for the
Alexander polynomials in corresponding  classes of plane singular
curves over $\RR$. 

The considered extreme cases are the case of the
arrangements of real lines and real arrangements of lines with zero-dimensional real locus,
the curves of even degree with empty real locus, and related curves of odd
degrees. To obtain a non-trivial divisibility relation one makes a
different selection of the braid $\B_{H^+}$ or $\B_{\RR}$ or their
product. In the case of arrangements of real lines, the braid
$\B_{H^+}$ is trivial, the Alexander polynomial of the closed braid in
$S^1 \times \CC$  is zero, and the divisibility relation is empty. On the
other hand, $\B_{\RR}=\Delta^2$, and one obtains a known divisibility
relation mentioned in Section \ref{intro}. Note that the Alexander module
of a link in $S^3$ (a closed braid) is torsion of the linking number
of any two components is non zero (cf. \cite{crowell}).

On the other hand, for the real curves with no real points, we show
that $\B_{H^+}=\Delta$ which leads to refined divisibility constraints
(cf. Prop \ref{alexanderbound}).

\begin{exam} We consider {\it maximally componentwise unreal
    arrangements} which are the arrangements over $\RR$ with the minimal number of real
points. Note the following:

\begin{prop} Let $\A$ be an arrangement over $\RR$. Then the set of its
  real points is not empty.
\end{prop}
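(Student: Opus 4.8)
The plan is to argue by a case distinction according to whether $\A$ contains a component defined over $\RR$. The starting observation is that, since the defining equation of $\A$ (the product of the linear forms cutting out its lines) is a real polynomial up to scalar, complex conjugation permutes the set of lines comprising $\A$. Consequently each line of $\A$ either is invariant under conjugation, and hence is cut out by a real linear form, or is carried to a distinct conjugate line that also belongs to $\A$. Since an arrangement contains at least one line, one of these two alternatives must occur, and I would treat them separately.

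In the first case, suppose some component $L$ of $\A$ is real. Then its real locus is a copy of $\RR\PP^1$ sitting inside $\RR\PP^2$, which is non-empty, and every one of these real points lies on $\A$; so the real locus of $\A$ is non-empty and we are done. In the second case, suppose no component of $\A$ is real. Then the components occur in conjugate pairs $L, \bar L$ with $L \ne \bar L$. Two distinct lines in $\CC\PP^2$ meet in exactly one point $p = L \cap \bar L$. The key step is to check that $p$ is fixed by complex conjugation: writing $\ell$ for a linear form defining $L$, so that $\bar \ell$ defines $\bar L$, the point $p$ satisfies $\ell(p) = \bar\ell(p) = 0$; conjugating these two identities shows that $\bar p$ again lies on both $L$ and $\bar L$, whence $\bar p \in L \cap \bar L = \{p\}$ and therefore $\bar p = p$. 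Thus $p \in \RR\PP^2$, and since $p \in L \subset \A$ this exhibits a real point of $\A$.

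I do not expect a genuine obstacle here, as the statement is essentially elementary; the only point requiring care is the verification that the intersection point of a conjugate pair of lines is genuinely conjugation-fixed, which rests on the uniqueness of the intersection point of two distinct lines in the projective plane (so that the conjugation-invariant set $L \cap \bar L$ is forced to be a single real point). It is worth noting that the argument uses nothing about $\A$ beyond the facts that it is conjugation-invariant and contains at least one line, so the same reasoning would apply verbatim to any conjugation-invariant finite union of lines in $\CC\PP^2$.
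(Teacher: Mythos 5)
Your proof is correct and follows essentially the same route as the paper: the paper's one-sentence argument is exactly your second case, namely that the intersection point of a pair of conjugate lines is real, and your first case (a real component has non-empty real locus $\RR\PP^1$) together with the verification via uniqueness of $L \cap \bar L$ merely makes explicit what the paper leaves implicit.
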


\begin{proof}  This is immediate since the intersection point of a
  pair of conjugate lines is real.
\end{proof}

Let $\A_k$ be an arrangement over $\RR$  with $k<\infty$ real multiple
points. Such an arrangement has the
  form 
\begin{equation}\label{maxreal}
\prod_{j=1}^{r_1} (((x+ay)-n_1)^2+m^1_jy^2) \cdots \prod_{j=1}^{r_i}(((x+ay)-n_i)^2+m_j^iy^2)
\cdots \prod_{j=1}^{r_k} (((x+ay-n_k)^2+m_j^{r_k}y^2)
\end{equation} 
where  $$ 0 \ne a \in \RR, n_i \in \RR, \ \ \ n_1 >n_2 \cdots >n_k, \ \ \ m_i^j
\in \RR^+,  \pm m_{i'}^{j'} \ne \pm m_i^j \ \ \forall (i,j) \ne (i',j')$$
$\A_k$ contains $d=2\sum_1^k r_i$ lines with $k$ real multiple points $(n_i,0), i=1,..,k$  having respective 
multiplicities $2r_i, i=1,\cdots,k$.
The singular points, which are the intersection points of  lines with
different $n_i$'s for $a \ne 0$, have $x$-coordinates with non-zero imaginary parts
and come in conjugate pairs i.e. $Cr \A_k=\{ x \in \CC \vert x=n_i\}$. The only real points
of $\A_k$  are the points $(n_i,0),
i=1,\cdots, k$.

Let us describe the braid $\B_{\RR}$ in this case and more
specifically the braids corresponding to (the classes of the) loops
$\gamma_i^r \in \pi_1(\CC_x\setminus Cr,b)$
 (cf. Definition \ref{braidsrealstructure}) i.e. corresponding to
 paths running from $b$ along real axis to one of the points $(n_i,0)$ 
while circumventing points in $Cr$ following a small semi-circle in the
 upper half plane, then upon reaching the vicinity of $(n_i,0)$ following the full circle around $(n_i,0)$ and
 finally returning to $b$ along the same path.

\begin{prop} There is a collection of non-intersecting segments
  $\delta_1,\cdots \delta_i \cdots\delta_k$ in
  $L_{\CC,b}$ each containing a set $A_i$ or $2r_i$ points belonging
  to the $i$-th group of lines (\ref{maxreal}) in $\A_k$ 
such that braid $\B_{\RR}$ is a product of the conjugates of the full twists
$\Delta^2_{A_k}$ about $\delta_i$.
In particular $\Delta_i^2$ commute.
\end{prop}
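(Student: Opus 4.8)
The plan is to match each real critical value $n_i$ with an ordinary multiple point of $\A_k$, realize the corresponding factor of $\B_{\RR}$ as a full twist on the strands that collide there, and then exploit the fact that the real locus of $\A_k$ is zero--dimensional to place the twisting segments $\delta_i$ in pairwise disjoint regions of the fiber. First I would fix the fiber $L_{\CC,b}$ over the real base point $b$ (to the right of all the $n_i$, i.e. $b>n_1$) and record the $d$ marked points as the $y$--coordinates of the intersections of the lines with $\{x=b\}$. Solving $(x+ay-n_i)^2+m^i_jy^2=0$ for $y$ shows that the two lines of the pair $(i,j)$ meet $\{x=b\}$ in a conjugate pair of points and that all $2r_i$ lines of the $i$--th group pass through $(n_i,0)$; I let $A_i\subset L_{\CC,b}$ be the resulting set of $2r_i$ marked points. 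The same computation gives $\mathrm{Im}\,y=(n_i-x)m^i_j/(a^2+(m^i_j)^2)$ for real $x$, so for real $x\neq n_i$ no marked point is real, and in the fiber over $x=n_i$ precisely the $2r_i$ points of the $i$--th group collapse to $y=0$. Since the $m^i_j$ are distinct and positive the $2r_i$ lines through $(n_i,0)$ are distinct, so $(n_i,0)$ is an \emph{ordinary} multiple point of multiplicity $2r_i$.

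Next I would compute the local contribution. Around a small loop enclosing only $n_i$ the colliding strands are exactly the $2r_i$ points of $A_i$, which cluster near $y=0$ in a nearby fiber, and the standard local braid monodromy of an ordinary multiple point of multiplicity $m$ is the full twist on those $m$ strands. Writing the generator as $\gamma^r_i=\alpha_i\,c_i\,\alpha_i^{-1}$, with $c_i$ a small circle about $n_i$ and $\alpha_i$ the connecting path (running along $\RR$ from $b$ to the vicinity of $n_i$, with small upper--half--plane detours around the intervening $n_1,\dots,n_{i-1}$), the braid monodromy yields
\[
\beta(\gamma^r_i)=\beta(\alpha_i)\,\Delta^2_{A_i}\,\beta(\alpha_i)^{-1},
\]
i.e. the full twist about the transported segment $\delta_i:=\beta(\alpha_i)(\delta^{\mathrm{loc}}_i)\subset L_{\CC,b}$ enclosing the $2r_i$ marked points, which is a conjugate of a standard full twist on $2r_i$ strands. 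Taking the product over $i$ then exhibits $\B_{\RR}=\prod_i\beta(\gamma^r_i)$ as the asserted product of conjugates of the $\Delta^2_{A_i}$.

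The remaining, and I expect the hardest, step is to arrange the $\delta_i$ to be pairwise disjoint. Here I would use that the only real points of $\A_k$ are the $(n_i,0)$: as $x$ moves along the real axis the group--$i$ strands meet $y=0$ only at $x=n_i$, so for $x$ near $n_i$ the $i$--th cluster sits at $y\approx 0$ while every other group stays bounded away from the real axis, its imaginary part being $(n_{i'}-x)m^{i'}_j/(a^2+(m^{i'}_j)^2)$ with $n_{i'}-x$ bounded away from $0$. Tracking this motion along $\RR$ under the transports $\alpha_i$---which differ only by small detours and produce no collision away from the $n_i$---one sees that the transported clusters, hence the segments $\delta_i$, can be taken to lie in pairwise disjoint regions of $L_{\CC,b}$. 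Since full twists about disjoint segments commute, this gives at once the final assertion that the $\Delta^2_i$ commute. The delicate point will be verifying that the detours in $\alpha_i$ for large $i$, which pass over $n_1,\dots,n_{i-1}$, do not drag $\delta_i$ across $\delta_1,\dots,\delta_{i-1}$; the zero--dimensionality of the real locus is exactly what rules this out.
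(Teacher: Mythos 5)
Your computation of the fibers, the identification of each $(n_i,0)$ as an ordinary multiple point of multiplicity $2r_i$, the local full-twist monodromy, and the conjugation by the transport $\beta(\alpha_i)$ all match the paper's argument. But the step you yourself flag as delicate --- that the transported segments $\delta_i$ can be taken pairwise disjoint in $L_{\CC,b}$ --- is the entire content of the proposition, and what you offer for it does not close the gap. Two concrete problems. First, non-collision of the point clusters along the transports is not sufficient: full twists about disjoint sets of marked points commute only when the sets can be separated by disjoint disks in the fiber, i.e.\ only when the supporting arcs can be chosen disjoint; disjoint but interleaved (linked) clusters give non-commuting twists. So one must control the isotopy classes of the arcs relative to all $d$ marked points, not merely the positions of the points. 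Second, the separation you invoke --- the other groups having $|\mathrm{Im}\,y|$ bounded away from $0$ for $x$ near $n_i$ --- cannot do this job: each group is conjugation-invariant, so every arc $\delta_j$ must cross the real axis of $L_{\CC,b}$; and during the detour of $\alpha_i$ over $n_j$ (for $j<i$) the group-$j$ cluster sweeps right past the origin at the scale of the detour radius, which is exactly where crossings could be created. Moreover $\delta_i$ and $\delta_j$ are images of arcs living in \emph{different} fibers under \emph{different} transports, and your proposal sets up no comparison of them in a common fiber.

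The paper's mechanism is a real-part (half-plane) separation, not an imaginary-part one. From $y=(n_i-t)(a\pm m^i_j\sqrt{-1})/(a^2+(m^i_j)^2)$ one sees that for real $t$ the group-$i$ cluster lies in $\{\mathrm{Re}\,y<0\}$ while $t>n_i$ and jumps to $\{\mathrm{Re}\,y>0\}$ once $t<n_i$. Hence if, instead of circling $n_j$, one lets $t$ continue moving left past $n_j$, the vanishing segment of group $j$ passes into the right half-plane and \emph{stays} there; so when $t$ reaches $b'_i$ (just right of $n_i$), the transported segment of group $j$ lies in $\{\mathrm{Re}\,y>0\}$ while the vanishing segment of group $i$ is born near the origin in $\{\mathrm{Re}\,y<0\}$, and the two are disjoint. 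Since the leftward transport from $b'_j$ to $b'_i$ followed by the rightward transport back to $b$ is homotopic to the direct transport defining $\delta_j$, one may then transport both segments simultaneously from $b'_i$ to $b$; ambient isotopy preserves disjointness, so $\delta_i\cap\delta_j=\emptyset$. This inductive half-plane argument is the missing idea; ``zero-dimensionality of the real locus'' by itself does not supply it.
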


\begin{proof}
Explicit form of the lines (\ref{maxreal}) shows that $y$
coordinates of the intersections of lines in this arrangement with the
fibers $L_{\CC,t}$ of projection used to calculate the braid monodromy
have the form 
\begin{equation} Re(y)=\lambda^i_j Im(y), \ \ \ \  i=1,\cdots, k,
  j=1,\cdots, r_i,  \lambda^i_j \ne \lambda^{i'}_{j'}, (i,j) \ne (i',j')
\end{equation}
and the braids corresponding to $\gamma_i^r$ can de described in terms
of the motion of
$d$ points $\A_k \cap \cal L_{\CC,b}$ along these lines
\footnote{Explicitly the intersection points of $L_{\CC,t}$ and the lines
  corresponding to the $i$-th factor in (\ref{maxreal}) are
  $y={{(n_i-t)(a \pm m_j\sqrt{-1})} \over {a^2+m_j^2}}$ and
  $\lambda^i_j=\pm{a \over m^i_j}$}.
Hence these $d$ points
are split into $k$ groups each containing 
$2r_i, i=1,...,k$ points. Each group moves toward the origin along the respective group of lines in $\CC$
 and arrives at $0 \in \CC$ at $k$ different moments. A group of $2r_i$
points while moving along their respective lines undergoing slight deviations at the
moments when other groups reach their critical points (corresponding
to $\gamma^r_i$ deviations from the real axis in $x$-plane),   
 Just before the time when the $i$-th group should arrive at the origin 
the points in this group undergo full twist (the braid corresponding to the
the singularity of $2r_i$ pairwise transversal lines) and returning to the 
the original position of the group along the same path. 

With each of $k$ critical points is associated ``vanishing segment''   
containing $2r_i$ points merging into $(n_i,0)$ which is
located at  $L_{\CC,b'}$ where $b'$ is point in the real part of $x$ axis
and where $\gamma_i^r$ starts the circle around $x=n_i$.  
Transporting this segment along the path of
$\gamma_i^r$ back from $b'$ to $b$ produces the vanishing segment of
this group in $L_{\CC,b}$. While $t$
moves from $b'$ toward $n_i$ and then completes the move along
semi-circle,  the set of $2r_i$ points merging in $(n_i,0)$ 
 move from half-plane $Re<0$ to half-plane $Re>0$. If $t$
continues to move in a negative direction instead of completing the full
circle around $(n_i,0)$, this segment will remain in the
right half plane when $t$ moves to the next critical value. Hence
vanishing segments corresponding to the critical values $n_i$ can be
selected inductively so that the segment corresponding to $n_i$ does not 
intersect the segments corresponding to $n_j, j<i$.
As a result, we obtain that the braids
corresponding to the loops $\gamma_i^r$ are full twists along the
collection of non-intersecting conjugation invariant segments in
$L_{\CC,b}$:
\begin{equation}\label{braidsmur}
     \Delta_{A_1}^2,
    \Delta_{A_2}^2\cdots\Delta_{A^k}^2
\end{equation}
where $\Delta_{A_i}$ is a rotation by 180 degrees of the group of points
$A_i$. In particular, these braids commute. The braid $\B_{\RR}$ is the
product of those twists. 
\end{proof}



\end{exam}

Next, consider other classes of algebraic curves.

\begin{prop}\label{braidgarside} Let $C$ be a real curve having degree $2d$ without real points and admitting a generic projection $\pi_{\CC}: C_{\CC}
  \rightarrow \CC$ having no critical values on the real axis.
Then the braid 
   $ {\B}_{H+}$ is a conjugate of the Garside element $\Delta_{2d}$ of the
   braid group $B_{2d}$. In particular, the braid monodromy of such curves satisfies the relation:
 \begin{equation}\label{braidmonodromyconstraint}
 \beta_1\cdots \beta_h=\gamma^{-1}\Delta_{2d}\gamma
 \end{equation}
 where $\beta_1,\cdots,\beta_h$ are the braids corresponding to the loops in a good ordered system of generators
 corresponding to the critical values in the upper half-plane and $\gamma \in B_{2d}$.
\end{prop}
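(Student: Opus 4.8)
The plan is to pin down $\B_{H^+}$ by geometry rather than by the algebraic identity alone. Since there are no real critical values we have $\B_{\RR}=1$, so Theorem \ref{maindecomp} degenerates to $\B_{H^+}\cdot\Re(rev(\B_{H^+}))=\Delta^2$, i.e. exactly equation (\ref{maineqdelta2}); as Remark \ref{juananswer} stresses, this constraint alone does not single out the conjugacy class of $\B_{H^+}$, so the real input has to come from the algebro-geometric description of the monodromy along the real axis. Throughout I will use the Garside identity $\Re(w)=\Delta w\Delta^{-1}$ and the convention (Section \ref{complconjsection}) that the $2d$ strands of the fibre over the real base point are ordered with the $d$ points of the upper half $y$-plane first and their $d$ conjugates last.

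First I would realise $\B_{H^+}$ as the monodromy of the boundary of a large upper half-disk. The product $\gamma_1\cdots\gamma_h$ of the upper good-ordered generators is freely homotopic in $\CC_x\setminus Cr$ to the loop $\partial D_+$ consisting of the real segment $[-R,R]$ followed by the upper semicircle $|x|=R$ from $+R$ to $-R$, for $R$ enclosing all upper critical values. Decomposing the transport along $\partial D_+$ gives $\B_{H^+}=\mu\,\Delta_{2d}$ up to conjugacy, where $\mu$ is the braid of transport along $[-R,R]$: since $C$ is transversal to the line at infinity, on the large semicircle the $2d$ roots rotate rigidly by $\pi$, contributing exactly the half-twist $\Delta_{2d}$ (one half of the full twist $\Delta^2$ at infinity), which is the content of the computation of $\beta(\RR\PP^1)$ recalled in Section \ref{intro}.

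Next I would read off the shape of $\mu$ from the two hypotheses. Because $C$ has no real points, over every real $x$ the fibre is $d$ conjugate pairs with no point on the real axis of the $y$-plane; hence along $[-R,R]$ the $d$ upper strands and the $d$ lower strands never cross the real axis, so $\mu$ contains no middle half-twist and lies in the commuting product of the upper braid group $\langle s_1,\dots,s_{d-1}\rangle$ and the lower braid group $\langle s_{d+1},\dots,s_{2d-1}\rangle$. Writing $\tau$ for the upper factor, complex conjugation (which reverses the orientation of the $y$-plane) makes the lower factor the conjugate braid $\overline{\tau}$ furnished by Proposition \ref{conjugatebraidRe} and formula (\ref{conjugationformulas}), so that $\mu=\tau\,\overline{\tau}$.

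Finally I would collapse $\mu\,\Delta_{2d}$: substituting $\overline{\tau}$ and pushing it through $\Delta_{2d}$ by means of $\Re(w)=\Delta w\Delta^{-1}$ turns the product into $\tau\,\Delta_{2d}\,\tau^{-1}$, which is the asserted conjugate of $\Delta_{2d}$ with $\gamma=\tau^{-1}$ in (\ref{braidmonodromyconstraint}); as a check, in $B_4$ the value $\tau=s_1$ reproduces precisely the solution $\Delta s_1^{-1}s_3$ of Remark \ref{juananswer}. The hard part will be the last collapse: one must verify that the combinatorics of conjugation on the lower strands (the interplay of $\Re$, $rev$ and $\Delta$, including the ordering of the conjugate strands forced by the real structure) really deposits $\mu\,\Delta_{2d}$ into the conjugacy class of $\Delta_{2d}$ and not into one of the other solutions of (\ref{maineqdelta2}) --- this is exactly the ambiguity flagged in Remark \ref{juananswer}, and it is here that the Garside identities (and, for the full list of algebraic solutions, the classification of \cite{gm2023}) do the decisive work; in particular one has to confirm that the permutation of $\B_{H^+}$ is the fixed-point-free involution carried by $\Delta_{2d}$, which is what constrains the admissible transports $\tau$.
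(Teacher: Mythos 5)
Your first two steps are sound and genuinely different from the paper's route: decomposing $\B_{H^+}$ (up to conjugacy) as the transport $\mu$ along $[-R,R]$ followed by the rotation by $\pi$ at infinity, and observing that the absence of real points forces $\mu=\tau\bar\tau$ with $\tau$ in the upper braid group $\langle s_1,\dots,s_{d-1}\rangle$ and $\bar\tau$ its mirror image in $\langle s_{d+1},\dots,s_{2d-1}\rangle$, is a correct structural observation (modulo base-point bookkeeping). But the final collapse, which you yourself flag as ``the hard part,'' is not a gap that Garside identities can fill --- the identity you propose is false, and the statement you need is false at this level of generality. The collapse $\tau\bar\tau\Delta=\tau\Delta\tau^{-1}$ requires $\bar\tau=\Delta\tau^{-1}\Delta^{-1}=\Re(\tau)^{-1}$, whereas the real structure gives (Proposition \ref{conjugatebraidRe}, formula (\ref{conjugationformulas})) $\bar\tau=(\Re(rev(\tau)))^{-1}$; these coincide exactly when $rev(\tau)=\tau$. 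Worse, the discrepancy is not curable by better combinatorics: since $\Delta^2$ is central, any conjugate of $\Delta$ squares to $\Delta^2$ on the nose, while
$$(\tau\bar\tau\Delta)^2=\bigl(\tau\, rev(\tau)^{-1}\bigr)\cdot\Re\bigl(rev(\tau)^{-1}\tau\bigr)\cdot\Delta^2,$$
and the factor in front of $\Delta^2$ lies in the direct product $\langle s_1,\dots,s_{d-1}\rangle\times\langle s_{d+1},\dots,s_{2d-1}\rangle$, hence is trivial only if $rev(\tau)=\tau$. So $\mu\Delta$ is conjugate to $\Delta$ if and only if $\tau$ is a palindrome; already $\tau=s_1s_2$ (so $2d\ge 6$) produces a braid satisfying every constraint you derived --- the upper/lower splitting, the conjugation symmetry, and equation (\ref{maineqdelta2}) --- which is \emph{not} conjugate to $\Delta_{2d}$. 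Your $B_4$ check does not detect this because $\tau=s_1$ is a palindrome.

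The missing content is therefore precisely the assertion that for an algebraic curve as in the Proposition the transport braid $\tau$ along the real axis satisfies $\tau=rev(\tau)$ in $B_d$, and this cannot follow from the two structural facts you use; it requires genuinely algebro-geometric input. The paper supplies that input in a completely different way: it exhibits the model curve $C_G:\ \prod_{j=1}^d\bigl(j(x^2+N^2)+y^2\bigr)=0$, computes directly (from the local model $\prod_j(j^2x-y^2)$) that its braid over the loop represented by the real axis in $H^+$ is a conjugate of the half-twist $\Delta_{2d}$, and then joins $C$ to $C_G$ by the pencil $F_s=(1-s)F+sG$, checking that $\B_{H^+}$ stays constant along the deformation --- both while no critical value is real (transversality to $\RR\times\CC$ persists) and at the moments when a conjugate pair of critical values crosses the real axis. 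To salvage your approach you would have to prove the reversibility of $\tau$ by some such deformation or limiting argument, at which point you have essentially reproduced the paper's proof; appealing instead to the classification of solutions of (\ref{maineqdelta2}) in \cite{gm2023} still leaves you to decide which class your geometric braid lands in, which is again the whole question.
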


\begin{rem} Consider the  following conditions on a curve and its projection: 

a) empty (i.e. without real points) over $\RR$ which 
the projection onto the $x$-axis is generic (as described before Prop.\ref{genericprojections}) and has no real critical values.

b) real curves given by an equation $f(x,y)=0$ where $f(x,y)$ has a constant sign for all $(x,y)\in\RR^2$ and projection onto $x$ axis is generic.

c) curves of an even degree admitting generic projection without real critical values.

Clearly a) implies b) and c) implies a). Indeed, a curve of an odd degree does have real points. If $c$ is a real critical value of projection of 
a curve as in c), then the critical point is either real, i.e. the curve is not empty, 
or a pair of conjugate critical points i.e. the projection is not generic. A curve of an even degree, admitting generic projection without
real critical values does not have to be empty (e.g. projection from an interior point of a real circle).

   
 \end{rem}
   
\begin{proof} {\it (of Prop. \ref{braidgarside})}.
Let $C_G \subset \CC\PP^2$ be a union of real smooth quadrics given by equation $G(x,y)=\prod_{j=1}^d (j(x^2+N^2)+y^2)=0,  \ (N\in \NN)$. Projection of $C_G$ onto $x$ axis has two critical values $x= \pm N\sqrt{-1}$ each corresponding to a singular point of $C_G$ (for $d>1$)
 each being a union $d$ smooth branches tangent to the fiber of the projection. The intersection index of each two branches at the critical point is 2.
 The alternative local model of this singularity (at the origin) is $\prod_1^d (j^2x-y^2)$. The subset of $j$-th branch of the latter germ over the loop 
 $\gamma(s)=e^{{2 \pi i s}}$ in the $x$-axis is $(e^{2 \pi i s}, je^{\pi i s}), j=1,\cdots d, 0 \le s \le 1$. It follows that the corresponding braid of this singularity is the half-twist $\Delta_{2d}$. Since, the loop 
$\gamma(s)$ (with the base point on the real axis) is homotopic to the loop represented by the real axis in $H^+\setminus i\sqrt{N}$, the braid of $C_G$ corresponding to the real axis is conjugate to $\Delta_{2d}$.

Let $F=0$ be the equation of $C$ and let $h$ be the number of critical points of projection of $C_{\CC}$ onto $x$-axis located in $H^+$.
Consider deformation $F_s=(1-s)F+sG, 0 \le s \le 1$ and the corresponding family of curves $C_{F_s}$. Critical points of projection of $C_{F_s}$ 
located in $H^+$ (resp. $H^-$) move to $N \sqrt{-1}$
(resp. $-N\sqrt{-1}$) along paths $\gamma_i(s)$ starting at $h$ critical points of $F$  formed by the critical points of $C_{F_{s,\CC}}, s<0<1$ and ending at the respective critical point of $G$. 

As long as the projection of 
the curve $C_{F_s,\CC}$ onto $x$ axis does not have real critical values, this curve remains transversal to $\RR \times \CC \subset \CC^2\subset \CC\PP^2$ where the first factor is the real locus of $x$-axis and the second one is $y$-axis. 
In particular, the braid $\B_{H^+}$ corresponding to the curve  $C_{F_s,\CC}$ does not change. For $s$ such that  $\gamma_i(s)$ crosses the real axis from $H^+$ to $H^-$, the conjugate $\bar Q$ of the corresponding critical point $Q$ of $C_{F_s,\CC}$ has the same local type as does $Q$. 
The critical value of projection of $\bar Q$ crosses from the opposite half-plane of the critical value at $Q$ and moves along $\overline {\gamma(s)}$. In particular, the factor of $\B_{H^+}$ 
which the braid monodromy associates with the element in the good ordered system of generators corresponding to the critical value on the path 
$\gamma_i(s)$ moving toward
the real axis is the braid corresponding to the same element in $\pi_1(H^+\setminus Crit(F(s))$ and the same singularity as before the 
crossing the real axis. 
So there will be no change in the braid $\B_{H^+}$ and the braid corresponding to $C_F$ coincides with a conjugate of the braid of $C_G$ i.e. $\gamma \Delta_{2d}\gamma^{-1}, \gamma \in B_{2d}$. 
\end{proof}


Recall that an acnode of multiplicity $l$ (cf. \cite{wall}) is a germ of a curve defined
over $\RR$ which set of real points consists of a single point and
the set of complex points is a union of $l$ transversal smooth branches. Note that this implies that 
 must be even i.e. $l=2k$ and the branches are pairwise conjugate.

\begin{prop}\label{acnode} Let $C$ be a curve of degree $2d$ defined over $\RR$ for which the set
  of real points consists of a single acnode of multiplicity $2k$.
 Then up, to conjugation in $B_{2d}$, one has $\B_{\RR}=\Delta_{2k}^2$ and $B_{H^+}=\gamma \Delta_{2d} \Delta_{2k}^{-1}\gamma^{-1}=\B_{H^-}$ where $\Delta_{2k}$ is a 180 degrees rotation of a complex conjugation invariant subset of $2d$ points.
\end{prop}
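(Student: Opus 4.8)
The plan is to prove the two statements separately: identify $\B_\RR$ from the local structure of the acnode, and pin down $\B_{H^+}$ by a deformation argument that mirrors the proof of Proposition \ref{braidgarside}. Fix coordinates so that $p$ is the projection $(x,y)\mapsto x$, place the acnode at a real value $x_0$, and take the projection generic; by Proposition \ref{genericprojections} the acnode is then the only critical point of $p$ on $\RR\PP^1$, while every other critical value occurs in a conjugate pair in $H^+\cup H^-$.

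For $\B_\RR$: an acnode of multiplicity $2k$ is, away from its real point, a union of $2k$ pairwise transversal smooth branches, that is, an ordinary $2k$-fold point whose branches come in conjugate pairs. For a generic projection these branches are transversal to the fibre, so the $2k$ points cut out on a nearby fibre make one full turn around the critical value as one traverses the loop $\gamma^r_1$ encircling $x_0$; the associated braid is therefore the full twist $\Delta_{2k}^2$. The $2k$ points form a conjugation invariant subset of the $2d$ points of $L_{\CC,b}$ (they are permuted in conjugate pairs), and $\Delta_{2k}$ is exactly the $180^\circ$ rotation of this subset. As $x_0$ is the only real critical value, $\B_\RR=\Delta_{2k}^2$ up to conjugation in $B_{2d}$.

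For $\B_{H^+}$ I would follow Proposition \ref{braidgarside}. Deforming $C$ through real curves each carrying a single acnode of multiplicity $2k$ as unique real point and no other real critical values leaves $\B_{H^+}$ unchanged up to conjugation, since critical points crossing $\RR\PP^1$ are replaced by their conjugates and do not contribute to the upper braid; this reduces the computation to the explicit model
\begin{equation}
  C_0:\qquad \prod_{j=1}^{k}\bigl(jx^2+y^2\bigr)\cdot\prod_{j=1}^{d-k}\bigl(j(x^2+N^2)+y^2\bigr)=0,\qquad N\gg 0,
\end{equation}
whose only real point is the acnode $\prod_{j=1}^k(jx^2+y^2)=0$ at the origin and whose remaining critical values are $\pm N\sqrt{-1}$. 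Now resolve this acnode by the real perturbation $jx^2+y^2\rightsquigarrow jx^2+y^2+\delta_j$ with $\delta_j>0$: the resulting curve $\tilde C_0$ has empty real locus and no real critical values, so Proposition \ref{braidgarside} gives $\B_{H^+}(\tilde C_0)=\gamma\Delta_{2d}\gamma^{-1}$. The perturbation turns the real critical value at the origin into $k$ conjugate pairs of critical values $\pm\sqrt{-1}\,\sqrt{\delta_j/j}$; the $k$ of these lying in $H^+$ are, by themselves, the upper braid of the degree $2k$ empty curve $\prod_{j=1}^k(jx^2+y^2+\delta_j)=0$, hence a conjugate of $\Delta_{2k}$ by Proposition \ref{braidgarside}. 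Comparing the good ordered systems before and after the perturbation yields $\B_{H^+}(\tilde C_0)=\B_{H^+}(C_0)\,\Delta_{2k}$, so that $\B_{H^+}=\gamma\Delta_{2d}\Delta_{2k}^{-1}\gamma^{-1}$ up to conjugation, with $\Delta_{2k}$ the half twist on the conjugation invariant subset found above. Finally $\B_{H^-}=\Re\,rev(\B_{H^+})$ by Theorem \ref{maindecomp}; since $\Re\,rev(\Delta_{2d})=\Delta_{2d}$ (\cite{garside}, Lemma 3) and $\Delta_{2k}$ is the half twist about a segment symmetric under $\Re$, whence $\Re\,rev(\Delta_{2k})=\Delta_{2k}$, one finds that $\Re\,rev(\B_{H^+})$ is conjugate to $\Delta_{2k}^{-1}\Delta_{2d}$, which in turn is conjugate to $\Delta_{2d}\Delta_{2k}^{-1}$ because $\Delta_{2d}^2$ is central; thus $\B_{H^-}=\B_{H^+}$ up to conjugation. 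As a check, $\Delta_{2d}\Delta_{2k}^{-1}\cdot\Delta_{2k}^2\cdot\Delta_{2k}^{-1}\Delta_{2d}=\Delta_{2d}^2=\Delta^2$, consistent with Theorem \ref{maindecomp}.

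The hard part will be the identification in the third paragraph. By Remark \ref{juananswer} the decomposition $\Delta^2=\B_{H^+}\B_\RR\,\Re\,rev(\B_{H^+})$ does not determine $\B_{H^+}$, so the algebra above only confirms consistency and cannot replace the geometry. What must really be checked is that resolving the acnode contributes exactly $\Delta_{2k}$ on the $H^+$ side and with the same conjugator $\gamma$ that Proposition \ref{braidgarside} attaches to $\Delta_{2d}$; this amounts to controlling the position of the newly created $H^+$ critical values within the good ordered system and the transport of their vanishing segments past the $2k$ acnode points back to the base fibre $L_{\CC,b}$. This bookkeeping of vanishing segments is the delicate step, everything else being either local (the computation of $\B_\RR$) or a direct transcription of the deformation argument of Proposition \ref{braidgarside}.
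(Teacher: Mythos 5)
Your identification of $\B_{\RR}$ is correct and is the same as the paper's: the acnode gives the only real critical value (Proposition \ref{genericprojections}), and its local braid is the full twist $\Delta_{2k}^2$ on a conjugation-invariant set of $2k$ points; the derivation of $\B_{H^-}$ from Theorem \ref{maindecomp} via $\Re\,rev$ is also fine. The gap is in the computation of $\B_{H^+}$, at the very first step: your reduction to the explicit model rests on the claim that $C$ can be deformed to $C_0$ ``through real curves each carrying a single acnode of multiplicity $2k$ as unique real point and no other real critical values.'' That is a path-connectivity assertion about a stratum of the space of real curves of degree $2d$, for which you give no argument; such strata of real curves typically split into many rigid-isotopy classes, so this cannot be assumed. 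The paper never needs such a statement: it runs the linear pencil $F_s=(1-s)F+sG$ directly from the \emph{given} equation $F$ to the model $G$ of Proposition \ref{braidgarside}. Since $F\ge 0$ on $\RR^2$ (vanishing only at the acnode) and $G>0$, every curve $F_s=0$ with $s>0$ is empty and has no real critical values, so the interpolation exists for an arbitrary $F$ --- pointwise positivity replaces connectivity. For small $s>0$ the critical value $\kappa$ at the acnode splits into conjugate pairs $\S(s)$, Proposition \ref{braidgarside} applies to $C_s$, and the contribution $\beta(\gamma_\epsilon^+)$ of the $H^+$ half of the cluster is identified as the half twist $\Delta_{2k}$ by a \emph{local} version of the argument of Proposition \ref{braidgarside} (deforming the germ of the acnode to a germ with exactly two conjugate critical values), with the base point chosen near $\kappa$ so that the Garside operations of $B_{2k}\subset B_{2d}$ are induced from those of $B_{2d}$; this yields that $\B_{H^+}\Delta_{2k}$ is conjugate to $\Delta_{2d}$.

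Even granting your reduction, the proposal is incomplete at its central point: the identity $\B_{H^+}(\tilde C_0)=\B_{H^+}(C_0)\,\Delta_{2k}$, with a conjugator compatible with the one attached to $\Delta_{2d}$, is exactly what must be proved, and you explicitly defer it as ``the hard part''; this is where the paper invokes the local deformation of the acnode germ just described. A smaller but genuine defect: your model $C_0$ does not have $\pm N\sqrt{-1}$ as its only non-real critical values. The lines $jx^2+y^2=0$ meet the conics $j'(x^2+N^2)+y^2=0$ in nodes with $x^2=j'N^2/(j-j')$, which is \emph{positive} whenever $j>j'$; so as written the model has real critical values (images of conjugate pairs of nodes), violating your own standing hypothesis. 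This can be repaired by taking all line coefficients smaller than all conic coefficients, but the extra nodal critical values persist at imaginary $x$ and would still have to be accounted for inside $\B_{H^+}(C_0)$.
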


\begin{proof}  Let $F(x,y)$ be an equation of $C$. As in Prop. \ref{braidgarside} 
the assumption of Prop. \ref{acnode} implies that if there exists $(x_0,y_0)$ such that 
$F(x_0,y_0)>0$ then $F(x,y) \ge 0$ for all $(x,y) \in \RR^2$ with the vanishing taking place only at the acnode. 
As in Prop. \ref{braidgarside} 
consider the family of curves $C_s$ with the equation $F_s(x,y)=(1-s)F+sG=0$. Then $F_s(x,y) >0$ for all $(x,y),0<s \le 1$.
The local braid of acnode is (a conjugate in $B_{2d}$ of) the full twist $\Delta_{2k}^2$ on a set of $2k$ points. It coincides with the component $B_{\RR}(C_0)$ 
of the braid of the curve $C$ corresponding to the real part of the set of critical values. Moreover, when $s$ moves away from zero, 
the critical value $\kappa$ of the projection at the acnode splits for $0< s <<\epsilon$ into a finite set $\S(s)$ of $K$ conjugate pairs of points. We select the base point $b$ (cf. beginning of Section \ref{braidmonodromy}) 
in proximity to $\kappa$ so that Garside operations on elements of $B_{2k} \subset B_{2d}$ 
coincide with those induced from $B_{2d}$ i.e. $\Re_{B_{2d}} (rev_{B_{2d}} \Delta_{2k})=\Delta_{2k}$. 
A small loop $\gamma_{\epsilon}$ around $\kappa$, also containing the set $\S$ inside it, is taken by the braid monodromy to $\B_{\RR}(C_0)$ but 
the loop $\gamma_{\epsilon}$ is also split into a product of two conjugate loops $\gamma^+_{\epsilon}, \gamma_{\epsilon}^-$ each containing inside the subsets $\S \cap H^+$ and $\S \cap H^-$ respectively.  The local version of the argument in Prop.\ref{braidgarside} deforming the germ of the acnode to the special germ having only two conjugate critical values shows that the braid monodromy takes each of  $\gamma_{\epsilon}^{\pm}$ to the rotation by 180 degrees $\Delta_{k}$.
This implies that the loop $\B_{H^+}(C_0)\beta(\gamma_{\epsilon}^+)$ is a conjugate of $\Delta_{2d}$. Hence $\B_{H^+}$ 
is a conjugate of $\Delta_{2d}\Delta_{2k}^{-1}$.
\end{proof}

Now we turn to the Alexander polynomials and explicit divisibility relations.

\begin{prop}\label{alexanderbound} Let $C$ be a real curve of even degree $d$ admitting a projection with no
  real critical values.  Then
\begin{equation}
      \Delta_C(t) \vert (t^d-1)^{{d \over 2}-1}(t^{d\over 2}+1)(t-1)
\end{equation}
In particular, the multiplicity of the root $exp{{2\pi i }\over 6}$ of the Alexander polynomial of such 
a curve, having nodes and cusps as the only singularities, is at most ${d \over 2}-1$.
\end{prop}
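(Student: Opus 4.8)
The plan is to combine the structural description of $\B_{H^+}$ from Proposition \ref{braidgarside} with the divisibility of Theorem \ref{alexanderdivisibilitymain}, thereby reducing the whole statement to a single Burau-type computation of the Alexander polynomial of the closure of the half-twist $\Delta_d$.

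First I would check that the hypotheses put us in the situation of Proposition \ref{braidgarside}. Since $d$ is even and the (generic) projection has no real critical values, condition (c) of the Remark following Proposition \ref{braidgarside} holds, hence so does condition (a): $C$ has no real points. Moreover, with no real critical values the real part of the critical locus is empty, so $\B_{\RR}=1$ and $\B_{H^+}\cdot\B_{\RR}=\B_{H^+}$. Proposition \ref{braidgarside} then gives that $\B_{H^+}$ is conjugate in $B_d$ to the Garside element $\Delta_d$. By Theorem \ref{alexanderdivisibilitymain}, $\Delta_C(t)$ divides the Alexander polynomial of the closed braid associated with $\B_{H^+}$; since conjugate braids have isotopic closures, this link is $L=\widehat{\Delta_d}$, and it remains only to compute $\Delta_L(t)$.

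For the computation I would use the reduced Burau representation $\psi\colon B_d\to GL_{d-1}(\QQ[t,t^{-1}])$ together with the standard expression of the Alexander polynomial of a closed braid as $\det(\psi(\beta)-I)$ divided by $1+t+\cdots+t^{d-1}$ (up to a unit; the Alexander module is torsion because the $d/2$ components of $L$ have pairwise nonzero linking numbers, cf. \cite{crowell}). The key point is that $\Delta_d^2$ is central, so by irreducibility of $\psi$ it acts as a scalar; comparing determinants (each generator has $\det\psi(s_i)=-t$ and $\Delta_d$ has length $d(d-1)/2$) gives $\psi(\Delta_d^2)=t^{d}I$ for $d$ even, so every eigenvalue of $\psi(\Delta_d)$ is $\pm t^{d/2}$. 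To fix the two multiplicities I would specialize $t\to 1$: there $\psi$ degenerates to the standard permutation representation and $\Delta_d$ to the reversal permutation $w_0$, whose eigenvalues are $+1$ with multiplicity $d/2-1$ and $-1$ with multiplicity $d/2$. As the eigenvalues of $\psi(\Delta_d)$ are globally the two functions $\pm t^{d/2}$, their multiplicities are constant in $t$ and therefore equal $d/2-1$ and $d/2$. Substituting gives
$$\Delta_L(t)\ \doteq\ \frac{(t^{d/2}-1)^{d/2-1}\,(t^{d/2}+1)^{d/2}}{1+t+\cdots+t^{d-1}}\ =\ (t-1)\,(t^{d/2}-1)^{d/2-2}\,(t^{d/2}+1)^{d/2-1},$$
which divides $(t^d-1)^{d/2-1}(t^{d/2}+1)(t-1)$ (the quotient is $t^{d}-1$); this yields the asserted divisibility.

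For the final assertion, recall from Section \ref{intro} that a curve with only nodes and cusps has $\Delta_C(t)=(t-1)^a(t^2-t+1)^b$, so I only need the multiplicity of the primitive sixth root $\exp(2\pi i/6)$ as a root of $\Delta_L$. Since this root lies on $t^{d/2}-1$ exactly when $12\mid d$, on $t^{d/2}+1$ exactly when $d\equiv 6\pmod{12}$, and never on $t-1$, its multiplicity in $\Delta_L$ is $d/2-2$ when $12\mid d$, equals $d/2-1$ when $d\equiv 6\pmod{12}$, and is $0$ otherwise; in all cases it is at most $d/2-1$, giving $b\le d/2-1$ (and the value $d/2-1$ in the case $d\equiv 6\pmod{12}$, e.g. for sextics, explains the asserted sharpness). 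The main obstacle is the Burau computation itself: pinning down the eigenvalue multiplicities exactly (the $t\to1$ specialization is what makes this clean) and correctly normalizing the closed-braid Alexander polynomial for a multi-component link, since it is precisely the $(t-1)$-powers in that normalization that must be controlled to land on the stated divisor. Note, however, that because $t^2-t+1$ is coprime to $t-1$, the multiplicity bound for the sixth root is insensitive to this normalization and is therefore robust.
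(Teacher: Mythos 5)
Your proof is correct, and your reduction (Proposition \ref{braidgarside} plus Theorem \ref{alexanderdivisibilitymain}, with $\B_{\RR}=1$ because there are no real critical values, and closures of conjugate braids being isotopic) is exactly the paper's, albeit the paper leaves it implicit. Where you genuinely diverge is in computing the Alexander polynomial of $\widehat{\Delta_d}$. The paper recognizes this closed braid as the algebraic link of the weighted homogeneous singularity $x^{d/2}-y^{d}=0$ (equivalently $\prod_{j}(j^2x-y^2)=0$) and quotes the Brieskorn--Pham--Milnor formula for the characteristic polynomial of the monodromy (\cite{milnor}, Sec.~9), using that for a fibered link the Alexander polynomial equals this characteristic polynomial; you instead stay entirely inside braid-group representation theory: reduced Burau, centrality of the full twist giving $\psi(\Delta_d^2)=t^dI$, and specialization at $t=1$ to the standard representation of the symmetric group (where $\Delta_d$ becomes the reversal permutation) to pin the eigenvalue multiplicities of $\psi(\Delta_d)$ as $d/2-1$ and $d/2$. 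Both routes yield the same polynomial $(t-1)(t^{d/2}-1)^{d/2-2}(t^{d/2}+1)^{d/2-1}=(t^d-1)^{d/2-2}(t^{d/2}+1)(t-1)$, which divides the displayed bound with quotient $t^d-1$; your computation is a self-contained algebraic alternative that avoids identifying the closure as an algebraic link, while the paper's route is shorter and extends directly to the odd-degree case treated in the subsequent Remark via Milnor--Orlik. One place where your write-up is in fact more careful than the displayed statement: the divisor $(t^d-1)^{d/2-1}(t^{d/2}+1)(t-1)$ by itself only bounds the multiplicity of $e^{2\pi i/6}$ by $d/2$ when $d\equiv 6 \pmod{12}$, and the asserted bound $d/2-1$ really requires the sharp link polynomial (exponent $d/2-2$), which is what you use and what the paper's sextic example implicitly uses; your flagged concern about $(t-1)$-normalization for multi-component links is harmless here for exactly the reason you give.
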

\begin{proof} We need to find the Alexander polynomial of the link
  which is represented by the closed braid on $d$ strings given by $\Delta$ i.e. the
  rotation by 180 degrees. As was already mentioned, such a link also can be described as the link
  of  plane curve singularity given by local equation $\prod_{j=1}^{d \over
    2}(j^2x-y^2), $ (when preimage of $x=1$ are integers $\pm j, j \le
  {d \over 2}$) or equivalently $x^{d \over 2}-y^d=0$. The Alexander
  polynomial of such a link is the
  characteristic polynomial of its monodromy. This is a weighted homogeneous
  singularity with weights of $x$ and $y$ being ${d \over 2}$ and
  $d$. Now the claim follows from Brieskorn-Pham-Milnor classical
  calculations (cf. \cite{milnor} Sec. 9).
\end{proof}

For the curves with a single acnode we obtain as an immediate consequence of
Proposition \ref{acnode} since $\B_{\RR}=\Delta_k^2$:

\begin{cor} The Alexander polynomial of a curve over $\RR$ with a single
  real point which is an acnode divides the Alexander polynomial of the
  link which is the closure of the braid $\Delta_d\Delta^{-1}_k$.
\end{cor}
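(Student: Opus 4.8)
The plan is to derive the corollary formally from Proposition \ref{acnode} and Theorem \ref{alexanderdivisibilitymain}, so no new geometric input is needed. First I would record that the hypotheses place us exactly in the situation of Proposition \ref{acnode}: the curve $C$ over $\RR$ has even degree (written $d$ here, in place of the $2d$ of that proposition) and its unique real point is an acnode of multiplicity $k$ (in place of $2k$), and the proposition's standing assumptions on the generic projection without real critical values are inherited. With these identifications, Proposition \ref{acnode} gives that $\B_{H^+}$ is, up to conjugation in $B_d$, the braid $\Delta_d \Delta_k^{-1}$, say $\B_{H^+}=\gamma\,\Delta_d\Delta_k^{-1}\,\gamma^{-1}$ for some $\gamma\in B_d$.

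Next I would invoke Theorem \ref{alexanderdivisibilitymain}, which asserts that the Alexander polynomial of $\pi_1(\CC^2\setminus C)$ divides the Alexander polynomial of the closed braid in $S^3$ associated with $\B_{H^+}$. The only remaining step is to pass from $\B_{H^+}$ to its conjugacy representative $\Delta_d\Delta_k^{-1}$. Here I would use that the closure $\widehat{\beta}$ of a braid $\beta$ depends only on its conjugacy class in $B_d$ (conjugate braids have isotopic closures---this is the conjugation move of Markov's theorem), so that the closures of $\B_{H^+}$ and of $\Delta_d\Delta_k^{-1}$ are isotopic links and hence carry equal Alexander polynomials. Combining this equality with the divisibility of Theorem \ref{alexanderdivisibilitymain} yields that the Alexander polynomial of $\pi_1(\CC^2\setminus C)$ divides the Alexander polynomial of the closure of $\Delta_d\Delta_k^{-1}$, which is the asserted statement.

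Since the two cited results already perform all of the work, the argument is purely formal, and I expect no genuine obstacle. The one point deserving explicit mention---because Proposition \ref{acnode} determines $\B_{H^+}$ only up to conjugacy---is precisely the conjugation-invariance of the closed-braid link, and therefore of its Alexander polynomial; I would state this invariance plainly so that the replacement of $\B_{H^+}$ by $\Delta_d\Delta_k^{-1}$ is justified rather than left implicit.
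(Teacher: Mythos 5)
Your proof is correct and follows essentially the same route as the paper, which presents the corollary as an immediate consequence of Proposition \ref{acnode} combined with Theorem \ref{alexanderdivisibilitymain} applied to $\B_{H^+}$; your explicit appeal to conjugation-invariance of the closed-braid link (Markov's conjugation move) merely makes precise a step the paper leaves implicit. One small wording slip: Proposition \ref{acnode} does not assume the projection has no real critical values --- the image of the acnode \emph{is} a real critical value, which is exactly why $\B_{\RR}=\Delta_k^2$ there --- but since that is not actually a hypothesis of the proposition, this does not affect your argument.
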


\begin{exam} The above Proposition gives a sharp ``estimate of  the
  degree of Alexander polynomials'' of real curves of degree 6 without
  real points.

  Recall that
  the Alexander polynomial of a plane curve of degree $6$ with $k>6$ cusps
  and at most nodes as other singularities is given by
  $(t^2-t+1)^{k-6}$. More specifically, for a curve of degree $d$ with
  cusps and nodes as the only singularities, denoting the 
  zero-dimensional subset of $\PP^2$ formed by cusps as $\Xi$, the Alexander
  polynomial is equal  to $1$ if $d$ is not divisible by $6$ and
  otherwise is equal to $(t^2-t+1)^s$ where
  $s=\dim H^1(\PP^2,\I_{\Xi}(d-3-{d \over 6}))$ with 
  $\I_{\Xi}$ denoting the ideal sheaf of functions vanishing at
  points of $\Xi$. For $d=6$ the latter is equal to 
$ H^0(\PP^2,\I_{\it  cusps}(2))-\chi(\PP^2,\I_{cusps}(2))=k-6$ 
  since $\dim
  H^0(\PP^2,\I_{\Xi}(2))=0$ if the number of cusps is greater than
  $6$ (if the cardinality of ${\Xi}$ is $6$ this dimension can be
  either $1$ or $0$ depending on 6 cusps being positioned on a conic
  or not).

The divisibility theorem of \cite{meduke} for a complex curve of degree $d$
transversal to the line at infinity tells that the Alexander
polynomial of the curve divides $(1-t)(1-t^d)^{d-2}$.
Indeed, the latter is the Alexander polynomial of the link at infinity which is the link of the closure of the
braid $\Delta^2$ (the Hopf link).  This bounds the exponent of
the Alexander polynomial of sextic by $4$. 
On the other hand by \ref{alexanderbound} the Alexander polynomial 
  of a sextic with no real points should divide
  $(t^3-1)(t+1)^2(t^2-t+1)^2(t-1)$. There is no sextic with 9 cusps with no
  real points since the number of cusps that are not real is even.
On the other hand, there exist a real sextic with 8 cusps and no real
points (cf. \cite{degtsextics}, \cite{itenberg}) for which 
the multiplicity of the factor $(t^2-t+1)$ is 2.
\end{exam} 

\begin{rem} It is not hard to calculate the Alexander polynomial of
  the link which is the closed braid corresponding to a curve of
  an odd degree having no real critical values. In this case the
  projection without critical points yields again rotation of a set
  with an odd number of points and the corresponding braid is again
  $\Delta$. Such link appears as the link of singularity $yx^{{d-1}
    \over 2}-y^{d+1}$ and calculation of the characteristic
polynomial of this weighted homogeneous singularity (with non-integer
weights, in which case one can use \cite{milnororlik}) one obtains
\begin{equation}
      (t^d-1)^{{d-1} \over 2}(t-1)
\end{equation}  
This may lead for example to a restriction on the degrees of the Alexander
polynomials with only ordinary triple points (in which case $d$ must
be divisible by $3$). It is unlikely however that this can give a sharp
bound on the degree.
\end{rem}

\bigskip

\end{document}